    \numberwithin{equation}{section}%
    \numberwithin{table}{section}%
    \numberwithin{figure}{section}
\newtheorem{lemma}{Lemma}[section]
\newtheorem{remark}{Remark}[section]
\newtheorem{definition}{Definition}[section]
\newtheorem{theorem}{Theorem}[section]
\newcommand{\normmm}[1]{{\left\vert\kern-0.25ex\left\vert\kern-0.25ex\left\vert #1
    \right\vert\kern-0.25ex\right\vert\kern-0.25ex\right\vert}}
\begin{document}

\title{Finite Element Methods For Interface Problems On Local Anisotropic Fitting Mixed Meshes}
\author[J. Hu]{Jun Hu}
\address[Jun Hu]{School of Mathematical Sciences, Peking University, Beijing, China}
\email{hujun@math.pku.edu.cn}

\author[H. Wang]{Hua Wang}
\address[Hua Wang \Letter]{School of Mathematical Sciences, Peking University, Beijing, China}
\email{wanghua.math@foxmail.com}
\thanks{In this research, Jun Hu was supported by NSFC projects 11625101 and 11421101; Hua Wang was supported by China Postdoctoral Science Foundation Grand 2019M660277 and  Jiangsu Key Lab for NSLSCS Grant 201906}

\date{}

\keywords{interface-fitted mesh, anisotropic element, multigrid method}
\maketitle

\begin{abstract}
A simple and efficient interface-fitted mesh generation algorithm is developed in this paper. This algorithm can produce a local anisotropic fitting mixed mesh which consists of both triangles and quadrilaterals near the interface. A new finite element method is proposed for second order elliptic interface problems based on the resulting mesh. Optimal approximation capabilities on anisotropic elements are proved in both the $H^1$ and $L^2$ norms. The discrete system is usually ill-conditioned due to anisotropic and small elements near the interface. Thereupon, a multigrid method is presented to handle this issue. The convergence rate of the multigrid method is shown to be optimal with respect to both the coefficient jump ratio and mesh size. Numerical experiments are presented to demonstrate the theoretical results.
\end{abstract}

\section{Introduction}
Let $\Omega$ be a convex polygon in $\mathbb{R}^{2}$ which is separated by a $C^{2}$-continuous interface $\Gamma$ into two sub-domains $\Omega_{1}$ and $\Omega_{2}$, see Figure \ref{fig.domain} for an illustration. Consider the following interface problem
\begin{eqnarray}\label{elliptic}
\begin{aligned}
 -\mathrm{div}(\beta\nabla u)&=f ~& \mathrm{in}~&\Omega_{1}\cup\Omega_{2},\\
 [\![u]\!]&=q ~&\mathrm{on}~&\Gamma,\\
 [\![\beta\frac{\partial u}{\partial \bm{n}_{\Gamma}}]\!]&=g~&\mathrm{on}~&\Gamma,\\
 u&=0~&\mathrm{on}~&\partial  \Omega,
\end{aligned}
\end{eqnarray}
where $[\![v]\!]:=(v|_{\Omega_{1}})|_{\Gamma}-(v|_{\Omega_{2}})|_{\Gamma}$ for any $v$ belonging to $H^{1}(\Omega_1\cup\Omega_2)$, and $\bm{n}_{\Gamma}$ is the unit normal vector of $\Gamma$ which points from $\Omega_{1}$ to $\Omega_{2}$, see Figure \ref{fig.domain}.
The coefficient function $\beta$ is discontinuous across the interface $\Gamma$, i.e.,
\begin{eqnarray}\label{beta}
\beta=\left\{
\begin{aligned}
&\beta_{1},~~\text{in}~\Omega_{1},\\
&\beta_{2},~~\text{in}~ \Omega_{2},
\end{aligned}
\right.
\end{eqnarray}
where $\beta_1$ and $\beta_2$ are positive constants.
\begin{figure}[H]
\centering
  \includegraphics[width=0.3\textwidth]{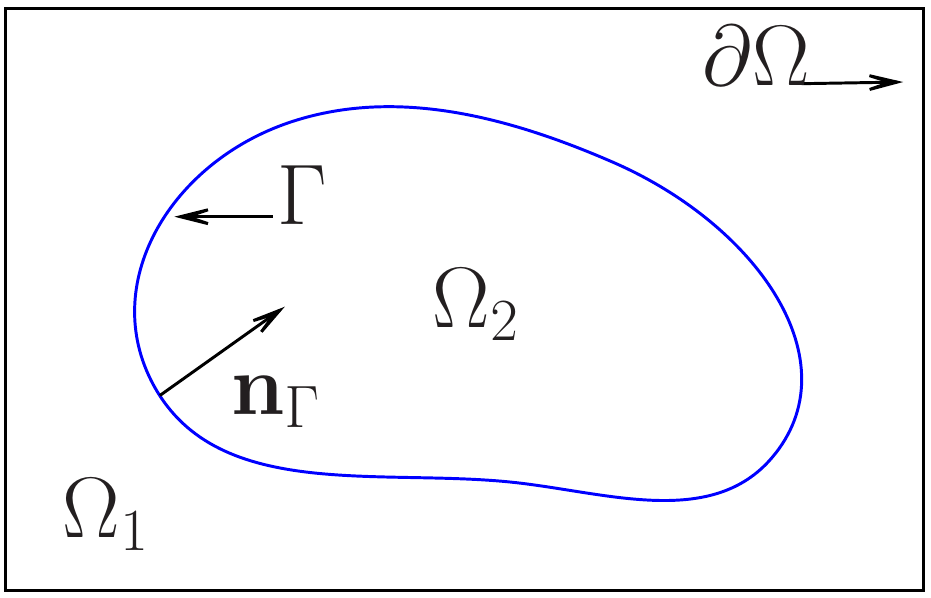}
  \caption{A sketch of the domain for the interface problem.}\label{fig.domain}
\end{figure}
This problem occurs widely in practical applications, such as fluid mechanics, electromagnetic wave propagations, materials sciences, and biological sciences. Mathematically, the interface problem usually leads to partial differential equations with discontinuous or non-smooth solutions across interfaces. Hence, classical numerical methods designed for smooth solutions do not work efficiently. For regularity of the interface problem \eqref{elliptic}, Chen and Zou \cite{Chen1998} proved that
\begin{equation*}
\|u\|_{H^2(\Omega_1)}+\|u\|_{H^2(\Omega_2)}\leq C_\beta (\|f\|_{L^2(\Omega)}+\|g\|_{H^{1/2}(\Gamma)}),
\end{equation*}
where $C_\beta$ is a constant independent of $u$, $f$ and $g$, but depends strongly and implicitly on the constants $\beta_1$, $\beta_2$ and the jump in the coefficient across the interface. Later, Huang and Zou \cite{Huang2002,Huang2007} improved the estimate
\begin{equation*}
|\beta_1^{1/2}u|_{H^2(\Omega_1)}+|\beta_2^{1/2}u|_{H^2(\Omega_2)}\leq C (\|f\|_{L^2(\Omega)}+\|g\|_{H^{1/2}(\Gamma)}).
\end{equation*}

For more than four decades, there has been a growing interest in the interface problem, and a vast of literature is available, see \cite{Babuska1970,Xu1982,Chen1998,Li1998,Belytschko1999,Hansbo2002,Li2003,Zi2003,Hansbo2014,
Adjerid2015,Kergrene2016,Guzman2016,Burman2016}. There are two major classes of finite element methods (FEM) for the interface problem, namely, the interface-fitted FEMs and the interface-unfitted FEMs, categorized according to the topological relation between discrete grids and the interface. Accuracy would be lost if standard finite element methods are used on interface-unfitted meshes. One way to recover the approximate accuracy is to use interface-fitted meshes, such as \cite{Xu1982,Chen1998}. Another way is to modify finite element spaces near the interface, see \cite{Li1998,Zi2003,Hansbo2004}. In \cite{Xu2016}, the authors proposed linear element schemes for diffusion equations and the Stokes equation with discontinuous coefficients on an interface-fitted grid which satisfies the maximal angle condition. Interface-fitted mesh generation algorithms which can produce a semi-structured interface-fitted mesh in two and three dimensions are proposed by Chen et al. In \cite{Chen2017mesh} for interface problems, virtual element methods are applied to solve the elliptic interface problem. With the assumption that each subdomain is an open disjointed polygonal or polyhedral region, Xu and Zhu \cite{Xu2008} proved a uniform convergence rate with respect to the mesh size and the coefficient jump ratios.

This work focuses on the interface-fitted mesh approach. Let $\mathcal{U}_h$ be a general quasi-uniform interface-unfitted mesh of $\Omega$, an interface-fitted mesh $\mathcal{F}_h$ can be generated by simply connecting the intersected points of $\Gamma$ and the mesh $\mathcal{U}_h$ successively. By this way, an interface-fitted mesh can be generated quite efficiently no matter how complicated the interface is. It is obvious that the interface-fitted mesh $\mathcal{F}_h$ contains some anisotropic triangles and quadrilaterals near the interface. By using the approximation results for anisotropic triangle elements proposed by Babuska \cite{Babuska1976} and anisotropic quadrilaterals elements proposed by Acosta and Duran \cite{Acosta2000}, optimal approximation capability of finite element spaces have been proved. Since these anisotropic elements and coefficient jump ratio may ¡¯stiffen¡¯ the matrix, a multigrid method is presented for solving the discrete system. Without assuming that each subdomain is an open disjointed polygonal as Xu and Zhu \cite{Xu2008} do, the convergence rate of the multigrid method is shown to be optimal with respect to the jump ratio and mesh size even though the interface $\Gamma$ is a general $C^2$-continuous curve.

The rest of this paper is organized as follows. In Section \ref{notation}, we introduce some notations which will be frequently used in this paper. In Section \ref{fem}, we present the local anisotropic finite element space and the weak form for the non-homogeneous second order elliptic interface problem. In Section \ref{erranaly}, we derive an a prior estimate for the finite element method based on local anisotropic fitting mixed meshes. In Section \ref{multigrid}, we propose a multigrid solver for the resulting linear system. We also present some numerical examples in Section \ref{sec:example} to validate our theoretical results.


\section{Notation and Definitions}\label{notation}
For integer $r\geq 0$, define the piecewise $H^{r}$ Sobolev space
\begin{equation*}
H^{r}(\Omega_{1}\cup\Omega_{2})=\{v\in L^2(\Omega);v|_{\Omega_{i}}\in H^{r}(\Omega_{i}),i=1,2\},
\end{equation*}
equipped with the norm and semi-norm
\begin{eqnarray*}\label{norm}
\begin{aligned}
\|v\|_{H^{r}(\Omega_{1}\cup\Omega_{2})}&=(\|v\|_{H^{r}(\Omega_1)}^{p}
+\|v\|_{H^{r}(\Omega_2)}^{p})^{1/p},\\
|v|_{H^{r}(\Omega_{1}\cup\Omega_{2})}&=(|v|_{H^{r}(\Omega_1)}^{p}
+|v|_{H^{r}(\Omega_2)}^{p})^{1/p}.
\end{aligned}
\end{eqnarray*}
Furthermore, let $\tilde{H}^{r}(\Omega_1\cup\Omega_2)=H^{1}_{0}(\Omega)\cap H^{r}(\Omega_{1}\cup\Omega_{2}).$

In the following, a simple and efficient method is presented for generating interface-fitted mesh. Figures \ref{mesh0}-\ref{mesh1} show how to obtain an interface-fitted grid. The domain is a square and the interface is a circle. $\mathcal{U}_h$ is an quasi uniform mesh which does not fit to the interface. By connecting intersected points of $\Gamma$ and $\mathcal{U}_{h}$ successively, a resolution $\Gamma_h$ (the red line) of $\Gamma$ (the blue line) is obtained. The generated mesh $\mathcal{F}_h$ is an interface-fitted mesh which contains anisotropic triangles and quadrilaterals near the interface.

\begin{figure}[H]
\begin{minipage}[t]{0.5\linewidth}
\centering
\includegraphics[width=1\textwidth]{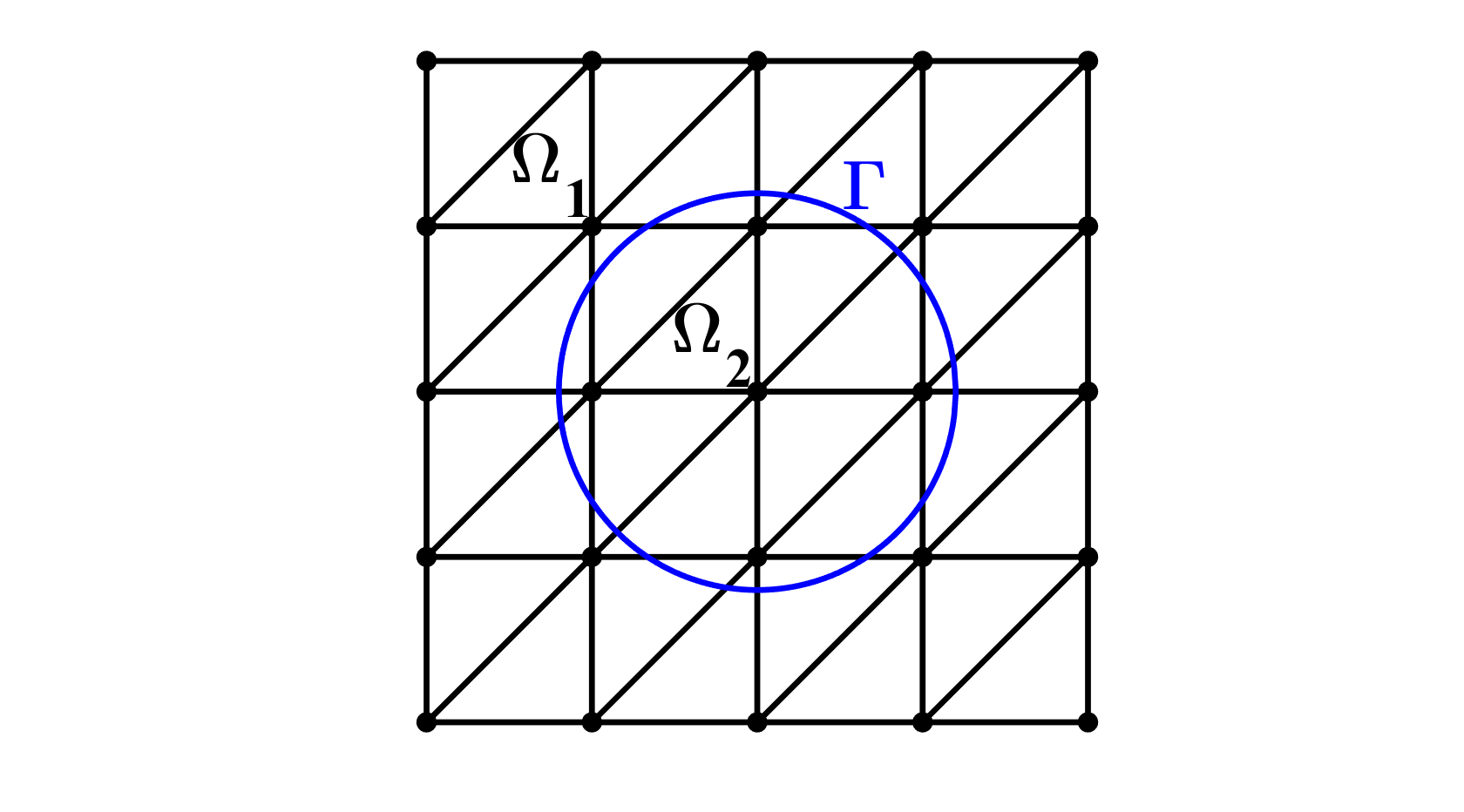}
\caption{An interface-unfitted mesh $\mathcal{U}_h$.}\label{mesh0}
\label{mesh0}
\end{minipage}%
\begin{minipage}[t]{0.5\linewidth}
\centering
\includegraphics[width=1\textwidth]{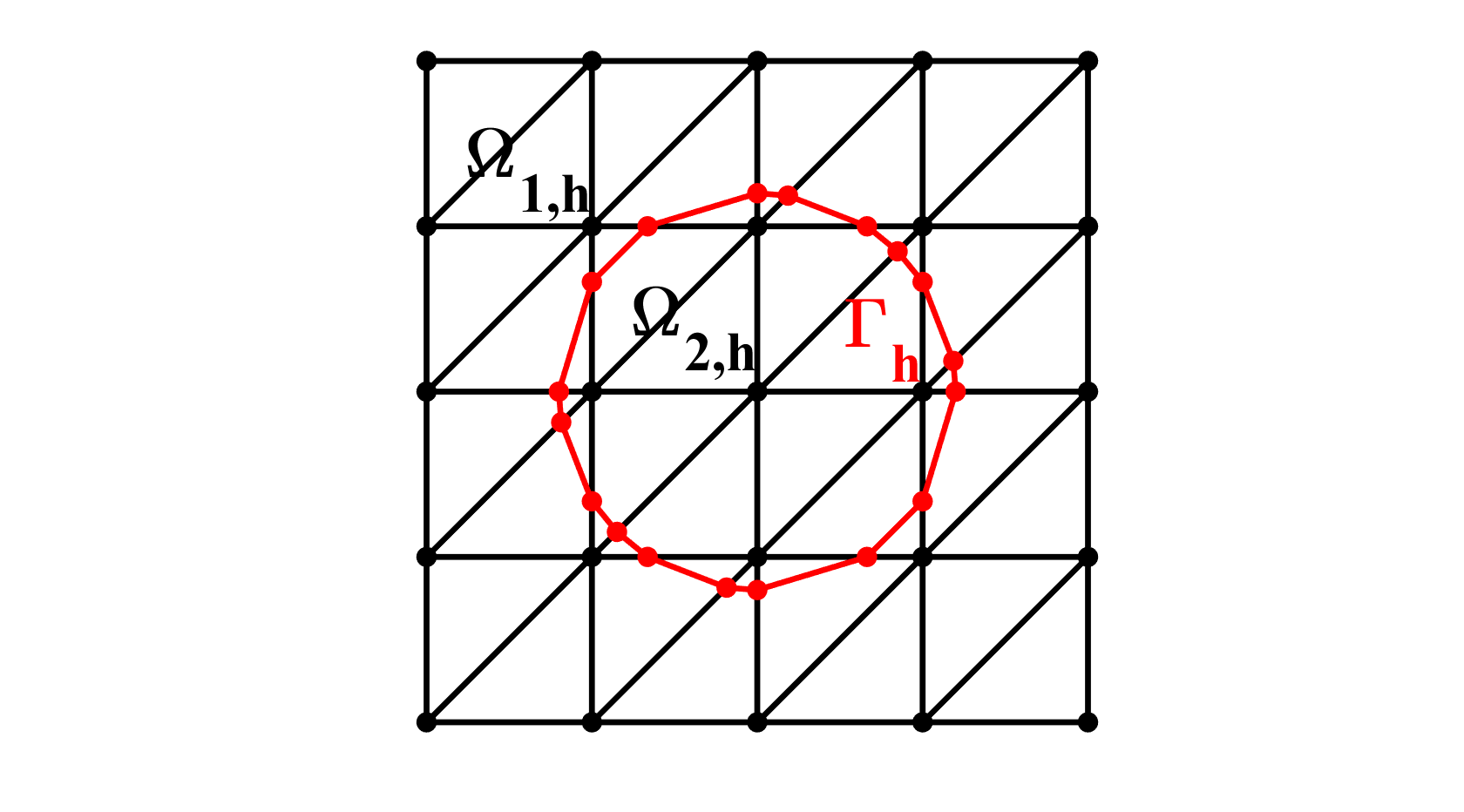}
\caption{An interface-fitted mesh $\mathcal{F}_h$.}\label{mesh1}
\label{mesh1}
\end{minipage}
\end{figure}
Let $\Omega_{2,h}$ be an approximation of $\Omega_2$, and $\Omega_{1,h}$ stand for the domain with $\partial \Omega$ and $\Gamma_h$ as its exterior and interior boundaries, respectively (see Figure \ref{mesh1}). Then, the domain $\Omega$ is separated into two sub-domains $\Omega_{1,h}$ and $\Omega_{2,h}$. The collection of interface elements in $\mathcal{U}_h$ and $\mathcal{F}_h$ is defined as
\begin{align*}
\mathcal{U}_h^I=\{T\in \mathcal{U}_h; meas_1(T\cap\Gamma)> 0\},\\
\mathcal{F}_h^I=\{T\in \mathcal{F}_h; meas_1(T\cap\Gamma)> 0\},
\end{align*}
where $meas_d$ denotes the $d$-dimensional measure. For any interface element $K\subset \Omega_{i,h}$, let $K_i = K\cap \Omega_i$ and $K_* = K\setminus K_i$, see Figures \ref{ielement1}-\ref{ielement2}.
\begin{figure}[H]
\begin{minipage}[t]{0.5\linewidth}
\centering
\includegraphics[width=0.6\textwidth]{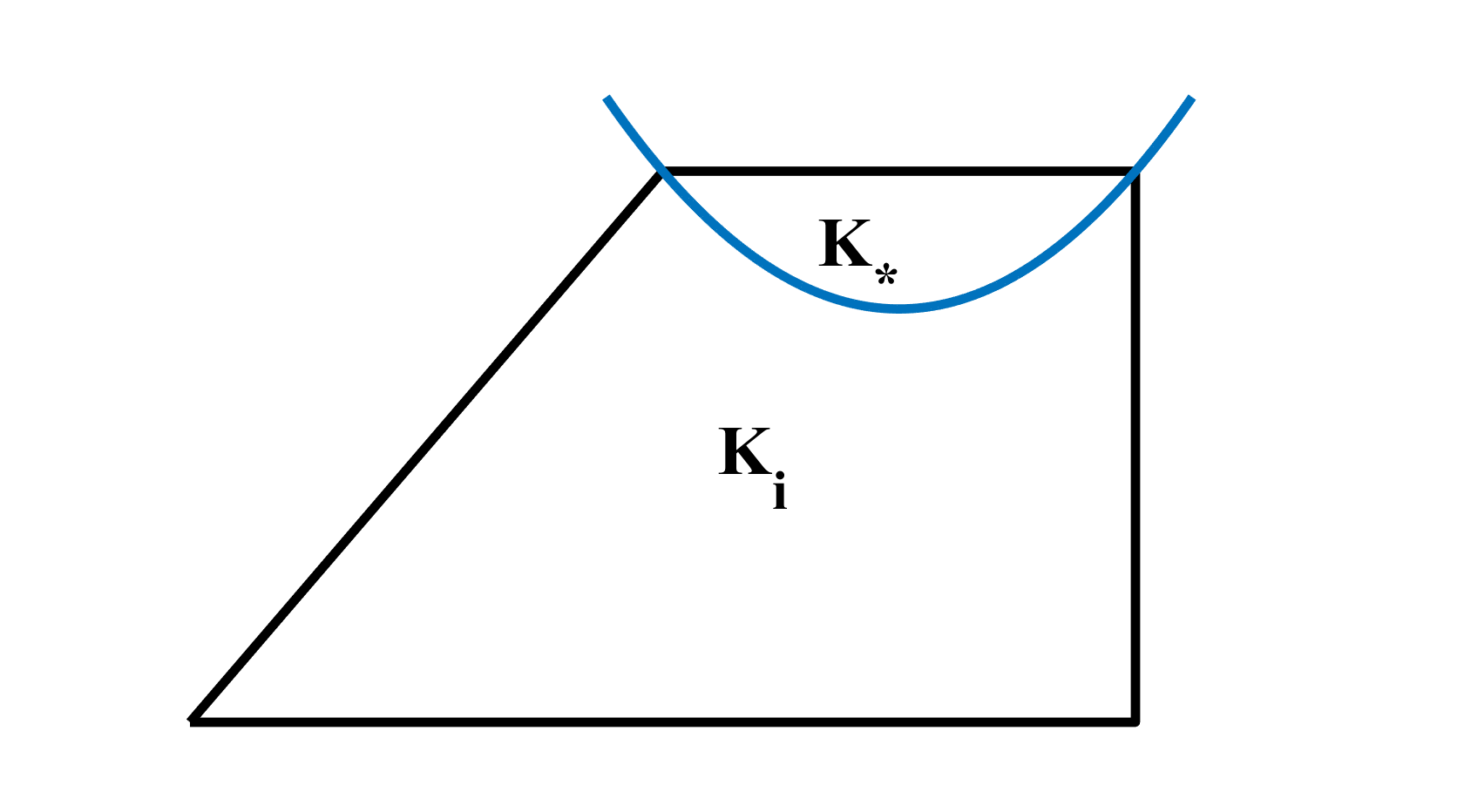}
\caption{A quadrilateral interface element in $\mathcal{F}_h$.}\label{ielement1}
\end{minipage}%
\begin{minipage}[t]{0.5\linewidth}
\centering
\includegraphics[width=0.6\textwidth]{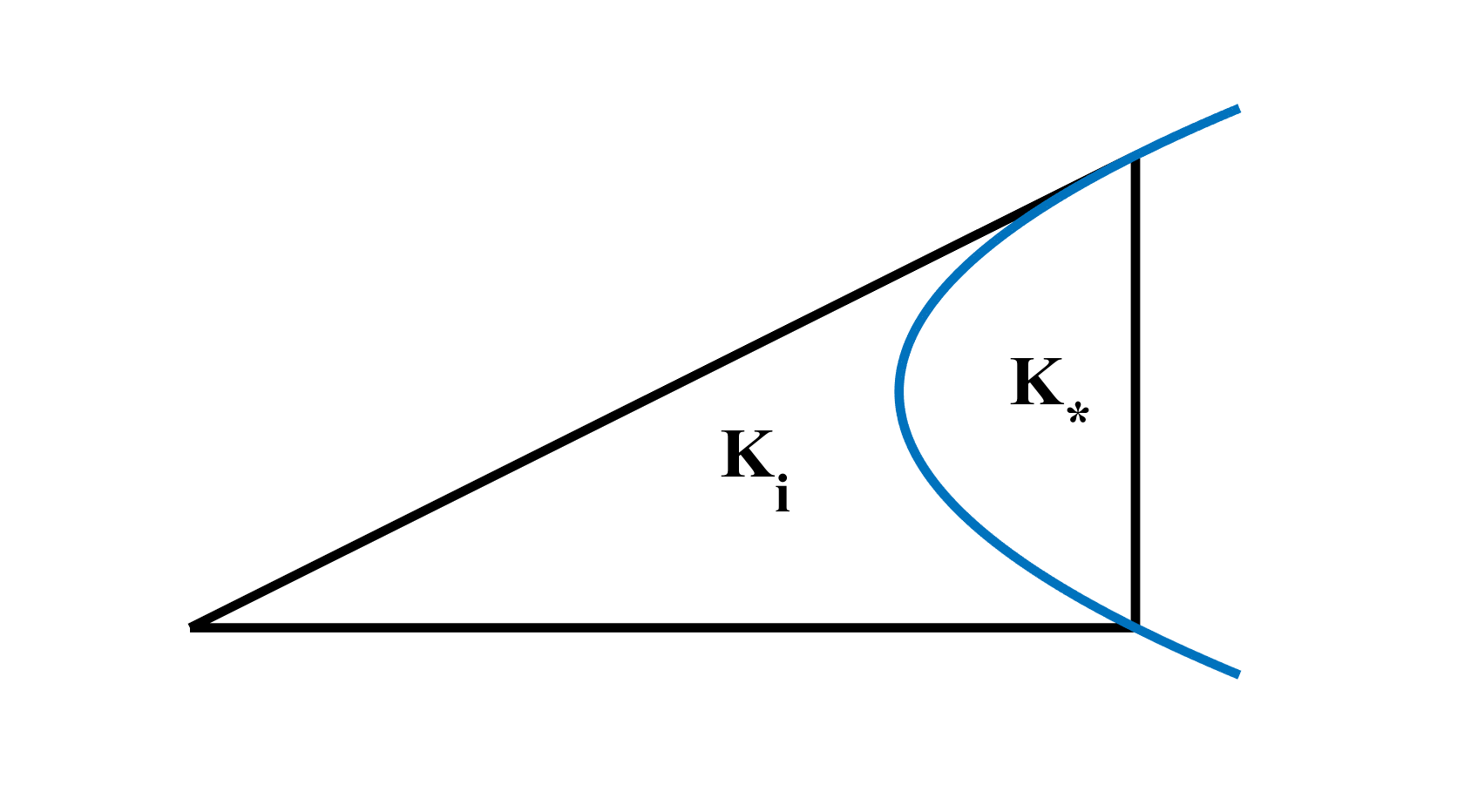}
\caption{A triangle interface element in $\mathcal{F}_h$.}\label{ielement2}
\end{minipage}
\end{figure}
Let $\Omega_{h}^I$ be the region enclosed by $\Gamma$ and $\Gamma_h$, i.e., $\Omega_h^I = \bigcup_{K\in \mathcal{F}_h^I} K_*$. For any function $v\in H^{2}(\Omega_1\cup\Omega_2)$, let $v_i=v|_{\Omega_i}$, $i=1,2$. By the extension theorem \cite{Adams2009}, there exists an operator $E:H^2(\Omega_i)\rightarrow H^2(\Omega)$ such that
\begin{equation*}
Ev_i|_{\Omega_{i}}=v_i,\quad\|Ev_i\|_{H^2(\Omega)}\lesssim \|v_i\|_{H^2(\Omega_i)},
\end{equation*}
for $i=1,2$ (see \cite{Adams2009} for details).  Here the notation $A\lesssim B$ represents the statement $A\leq$ constant $\times B$, where the constant is always independent of the mesh sizes of the triangulations and the location of the interface intersected with the mesh.

In order to analyze the interpolation error for these anisotropic triangles and quadrilaterals, the following are some basic definitions which mainly follow \cite{Babuska1976} and \cite{Acosta2000}.
\begin{definition}[Minimum angle condition]
We say that a quadrilateral $K$ (resp., a triangle $T$) satisfies $Minac(\alpha)$, if the angles of $K$ (resp., $T$) are greater than or equal to $\alpha$. Similarly, we say that a mesh $\mathcal{F}_h$ satisfies $Minac(\alpha)$, if there exists a uniform $\alpha\in(0,\pi]$ such that any $T\in \mathcal{F}_h$ satisfies $Minac(\alpha)$.
\end{definition}

\begin{definition}[Maximum angle condition]
We say that a quadrilateral $K$ (resp., a triangle $T$) satisfies $Maxac(\psi)$, if the angles of $K$ (resp., $T$) are less than or equal to $\psi$. Similarly, we say that a mesh $\mathcal{F}_h$ satisfies $Maxac(\alpha)$, if there exists a uniform $\alpha\in(0,\pi]$ such that any $T\in \mathcal{F}_h$ satisfies $Maxac(\alpha)$.
\end{definition}

\begin{definition}
Let $K$ be a convex quadrilateral. We say that K satisfies the regular decomposition property with constants $N\in R$ and $0<\psi<\pi$, or shortly $RDP(N,\psi)$, if we can divide $K$ into two triangles along one of its diagonals, which will always be called $d_1$, in such a way that $|d_2|/|d_1|\leq N$ and both triangles satisfy $Maxac(\psi)$.
\end{definition}

\section{Finite element methods for the elliptic interface problem}\label{fem}
\subsection{Finite element space}
For a general convex quadrilateral $K$, denote its vertices by $M_i$ in anticlockwise order. Let $\hat{K}$ be the reference unit square, $F_K:\hat{K}\rightarrow K$ be the transformation defined by
\begin{equation}
F_K(\hat{\mathbf{x}})=\sum\limits_{i=1}^4 M_i\hat{\phi_i}(\hat{\mathbf{x}}),
\end{equation}
where $\hat{\phi}_1=(1-\hat{x})(1-\hat{y}), \hat{\phi}_2=\hat{x}(1-\hat{y}), \hat{\phi}_3=\hat{x}\hat{y}, \hat{\phi}_4=(1-\hat{x})\hat{y}$.
Observe that, $F_K$ is a bijection from the unit square $\hat{K}$ onto the quadrilateral $K$.
\begin{figure}[H]
\vspace{-1cm}
\centering
  \includegraphics[width=0.75\textwidth]{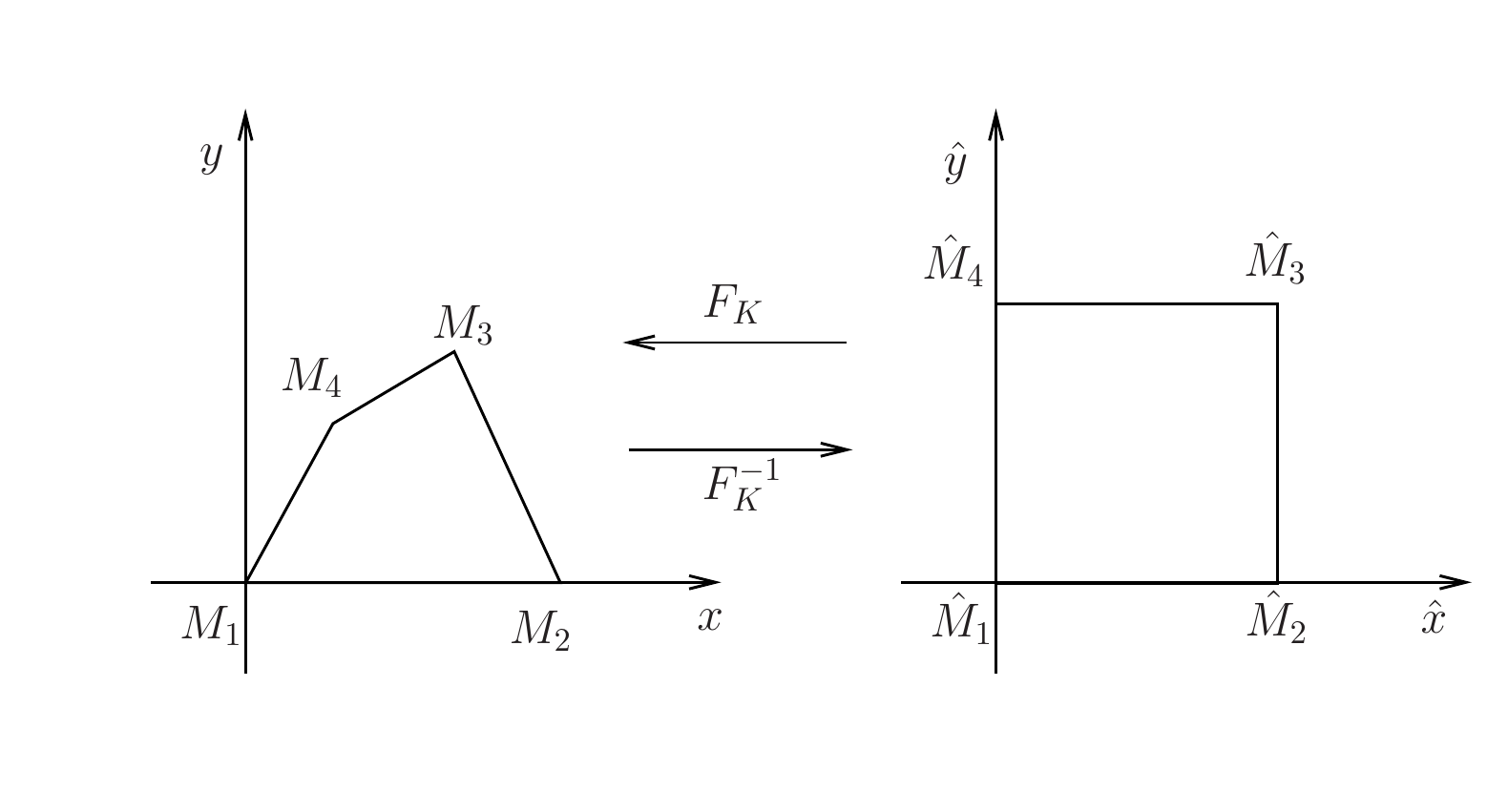}
  \vspace{-1cm}
  \caption{A transformation from $\hat{K}$ to $K$.}
\end{figure}
The basis functions on $K$, no longer bilinears in general, are defined by $\phi_i (x)=\hat{\phi}_i(F_K^{-1}(x))$. Thus, the shape function space on $K$ is defined by
\begin{equation*}
\mathcal{Q}_1(K)=span\{\phi_i,1\leq i\leq 4\}.
\end{equation*}

Similarly, denote the linear shape function space on a general triangle $T$ by $\mathcal{P}_1(T)$, i.e.,
\begin{equation*}
\mathcal{P}_1(T)=span\{1,x,y\}.
\end{equation*}
Thus the finite element space defined on $\mathcal{F}_h$ can be written as
\begin{equation}\label{aspace}
V_h=\{v_h\in C_0(\Omega); v_h|_T\in \mathcal{P}_1(T), v_h|_K\in \mathcal{Q}_1(K)~\forall T,K\in \mathcal{F}_h\}.
\end{equation}
\subsection{Weak form}
By the extension theorem, for any $q\in H^{3/2}(\Gamma)$, there exists a function $z_0\in H^2(\Omega_2)$ s.t.
\begin{equation*}
z_0|_{\Gamma} = -q,\quad \|z_0\|_{2,\Omega_2}\lesssim \|q\|_{3/2,\Gamma}.
\end{equation*}
Let
\begin{eqnarray*}
z=\left\{
\begin{aligned}
&0,~~~~&\text{in}~~~\Omega_{1},\\
&z_0,~~~~&\text{in}~~~ \Omega_{2},
\end{aligned}
\right.
\end{eqnarray*}
the non-homogeneous problem \eqref{elliptic} can be rewritten as
\begin{eqnarray}
\begin{aligned}\label{elliptic2}
 -\mathrm{div}(\beta\nabla \tilde{u})&=f+\mathrm{div}(\beta\nabla z) ~&\mathrm{in}~&\Omega_{1}\cup\Omega_{2},\\
 [\![\tilde{u}]\!]&=0~& \mathrm{on}~&\Gamma,\\
 [\![\beta\frac{\partial \tilde{u}}{\partial n}]\!]&=g-[\![\beta\frac{\partial z}{\partial n}]\!]~ & \mathrm{on}~&\Gamma,\\
 \tilde{u}&=0~& \mathrm{on}~&\partial  \Omega,
\end{aligned}
\end{eqnarray}
with $\tilde{u}=u-z$. The variational formulation for the homogeneous problem \eqref{elliptic2} is: find $\tilde{u}\in V:=H^1_0(\Omega)$ such that
\begin{equation}\label{cweak}
a(\tilde{u},v)=F(v)\quad\forall v\in V,
\end{equation}
where
\begin{align*}
&a(\tilde{u},v)=\int_{\Omega_1\cup\Omega_2}\beta\nabla \tilde{u}\cdot\nabla vdx,\\
&F(v)=\int_{\Omega}fvdx+\int_{\Gamma}gvds-\int_{\Omega_2}\beta_2\nabla z\cdot\nabla vdx.
\end{align*}
Let $\{O_i\}_{i=1}^{m}$ be the set of all nodes of the triangulation $\mathcal{F}_h$ lying on the interface $\Gamma$ (the red points in Figure \ref{mesh1}), and $\{\phi_i\}_{i=1}^m$ the set of nodal basis functions
associated to $\{O_i\}_{i=1}^{m}$. Let $\pi_h: C_0(\Omega)\mapsto V_h$ be the nodal interpolation operator, i.e.,
\begin{equation}
\pi_h u\in V_h,~~ \pi_h u(M)=u(M) ~\forall M\in \mathcal{O},
\end{equation}
where $\mathcal{O}$ is the set of nodal points of $\mathcal{F}_h$. Assume $g$ and $q$ belong to $C(\Gamma)$, define
\begin{align}
&g_h = \sum_{i=1}^m g(O_i)\phi_i,\\
&z_h|_{\Omega_{2,h}} = \pi_h z, \quad z_h|_{\Omega_{1,h}}=0.
\end{align}
Usually, the integrals over $\Omega_i$, and $\Gamma$ could be performed exactly. A reasonable discrete weak form for the interface problem \eqref{elliptic2} with variational crimes is: find $\tilde{u}_h\in V_h$ such that
\begin{equation}\label{weak1}
a_h(\tilde{u}_h,v_h)=\tilde{F}_h(v_h)\quad\forall v_h\in V_h,
\end{equation}
where
\begin{align*}
a_h(\tilde{u}_h,v_h)&=\int_{\Omega_{1,h}}\beta_1\nabla \tilde{u}_h \cdot\nabla v_hdx + \int_{\Omega_{2,h}}\beta_2\nabla \cdot\tilde{u}_h \nabla v_hdx\\
\tilde{F}_h(v_h)&=\int_{\Omega}fv_hdx+\int_{\Gamma_h}g_hv_hds-\int_{\Omega_{2,h}}
\beta_2\nabla z_h\cdot\nabla v_hdx.
\end{align*}
Let $u_h=\tilde{u}_h+z_h$, it is obvious that $u_h$ is an suitable approximation of $u$. However, $z$ is a unknown function, therefore $z_h$ is unknown either. Divide $z_h$ into two parts
\begin{equation*}
z_h= z_{h,\Gamma}+z_{h,0},
\end{equation*}
where
\begin{equation*}
z_{h,\Gamma}|_{\Omega_{2,h}} = -\sum_{i=1}^m q(O_i)\phi_i,
\quad z_{h,\Gamma}|_{\Omega_{1,h}}=0.
\end{equation*}
Therefore, the discrete weak formulation \eqref{weak1} can be rewritten as: find $\bar{u}_h\in V_h$ such that
\begin{equation}\label{weak2}
a_h(\bar{u}_h,v_h)=\bar{F}_h(v_h)\quad\forall v_h\in V_h,
\end{equation}
where
\begin{equation*}
\bar{F}_h(v_h)=\int_{\Omega}fv_hdx+\int_{\Gamma_h}gv_hds-\int_{\Omega_{2,h}}\beta_2\nabla z_{h,\Gamma}\cdot\nabla v_hdx.
\end{equation*}

\begin{lemma}\label{uh}
Let $\tilde{u}_h$ and $\bar{u}_h$ be the solutions of Problem \eqref{weak1} and \eqref{weak2}, respectively. The following relation holds
\begin{equation}\label{weakrelation}
u_h = \tilde{u}_h+z_h=\bar{u}_h+z_{h,\Gamma}
\end{equation}
\end{lemma}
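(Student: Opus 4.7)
The first equality $u_h=\tilde{u}_h+z_h$ is built into the definition of $u_h$, so the only content is the second equality $\tilde{u}_h+z_h=\bar{u}_h+z_{h,\Gamma}$, which after rearranging is equivalent to $\bar{u}_h=\tilde{u}_h+z_{h,0}$, where $z_{h,0}:=z_h-z_{h,\Gamma}$. The plan is to verify that $z_{h,0}\in V_h$, then show that $\tilde{u}_h+z_{h,0}$ solves \eqref{weak2}, and finally invoke uniqueness.

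\textbf{Step 1: $z_{h,0}\in V_h$.} Individually $z_h$ and $z_{h,\Gamma}$ are piecewise objects defined separately on $\Omega_{1,h}$ and $\Omega_{2,h}$, and in general jump across $\Gamma_h$; the point of the splitting is that $z_{h,0}$ absorbs exactly these jumps. On $\Omega_{1,h}$, both summands vanish, so $z_{h,0}\equiv 0$ there. On $\Omega_{2,h}$, $z_{h,0}=\pi_h z+\sum_{i}q(O_i)\phi_i$, which lies in $V_h|_{\Omega_{2,h}}$. The only thing to check is continuity across $\Gamma_h$, which reduces to checking that $z_{h,0}$ vanishes at each interface node $O_i$. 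Since the $O_i$ lie on $\Gamma$ itself, one has $z(O_i)=z_0(O_i)=-q(O_i)$, so
\begin{equation*}
z_{h,0}(O_i)\big|_{\Omega_{2,h}}=\pi_h z(O_i)+q(O_i)=-q(O_i)+q(O_i)=0,
\end{equation*}
matching the value $0$ from the $\Omega_{1,h}$ side. Thus $z_{h,0}\in V_h$.

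\textbf{Step 2: $\tilde{u}_h+z_{h,0}$ solves \eqref{weak2}.} Fix $v_h\in V_h$. By bilinearity and \eqref{weak1},
\begin{equation*}
a_h(\tilde{u}_h+z_{h,0},v_h)=\tilde{F}_h(v_h)+a_h(z_{h,0},v_h).
\end{equation*}
Because $z_{h,0}$ vanishes on $\Omega_{1,h}$, the second term reduces to $\int_{\Omega_{2,h}}\beta_2\nabla z_{h,0}\cdot\nabla v_h\,dx$. Substituting $z_{h,0}=z_h-z_{h,\Gamma}$ into the definition of $\tilde{F}_h$ then collapses the two $\Omega_{2,h}$-integrals to a single $\int_{\Omega_{2,h}}\beta_2\nabla z_{h,\Gamma}\cdot\nabla v_h\,dx$, which is precisely the corresponding term in $\bar{F}_h(v_h)$. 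Hence $a_h(\tilde{u}_h+z_{h,0},v_h)=\bar{F}_h(v_h)$.

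\textbf{Step 3: uniqueness.} Under the standing coercivity of $a_h$ on $V_h$, problem \eqref{weak2} has a unique solution, so $\bar{u}_h=\tilde{u}_h+z_{h,0}$, which is the desired identity. The only genuinely nontrivial point is Step 1: the placement of the interpolation nodes $O_i$ on $\Gamma$ is exactly what makes the decomposition $z_h=z_{h,\Gamma}+z_{h,0}$ land in the conforming space $V_h$ at all; everything after that is bilinearity plus the definitions.
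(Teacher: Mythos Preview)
Your proof is correct and follows essentially the same approach as the paper: both arguments reduce to showing $\bar{u}_h=\tilde{u}_h+z_{h,0}$ via coercivity/uniqueness of $a_h$ on $V_h$, after observing that $z_{h,0}\in V_h$. The paper simply asserts $z_{h,0}\in V_h$ and subtracts \eqref{weak2} from \eqref{weak1} directly, whereas you spell out the verification that $z_{h,0}$ is continuous across $\Gamma_h$ (your Step~1) and phrase Step~2 as ``$\tilde{u}_h+z_{h,0}$ solves \eqref{weak2}'' rather than ``the difference is $a_h$-orthogonal to $V_h$''; these are the same argument.
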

\begin{proof}
Subtracting \eqref{weak2} from \eqref{weak1} yields that
\begin{equation*}
a_h((\tilde{u}_h+z_{h,0})-\bar{u}_h,v_h)=0,~~\forall v_h\in V_h.
\end{equation*}
Since $z_{h,0}\in V_h$, it is straightforward to see that
\begin{equation*}
\tilde{u}_h+z_{h,0}=\bar{u}_h,
\end{equation*}
and consequently \eqref{weakrelation} holds.
\end{proof}
\section{Error Analysis}\label{erranaly}
\subsection{Interpolation error estimates}
Interpolation estimates are fundamental in finite element error analysis. Since the interpolation estimates on non-interface elements are standard, one only needs to consider the anisotropic elements near the interface.
\begin{lemma}[\cite{Babuska1976}, Theorem 2.3]\label{trierror}
Let $T$ be a triangle with diameter $h$, if $v\in H^2(T)$, there exists a constant $C$ independent of $T$ such that
\begin{equation}
\|v-\pi_h v\|_{L^2(T)}\leq C h^2|v|_{H^2(T)},
\end{equation}
and, if $T$ satisfies $MAC(\psi)$, then there exists a constant $C(\psi)$ which only depends on $\psi$ such that
\begin{equation}
|v-\pi_h v|_{H^1(T)}\leq C(\psi)h|v|_{H^2(T)}.
\end{equation}
\end{lemma}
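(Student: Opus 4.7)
The plan is to reduce to a reference triangle via affine equivalence, carefully tracking shape-dependent metric factors, and then to obtain the $H^1$ estimate by working with directional derivatives along two edges (a Babuska--Aziz style argument). The $L^2$ bound will require no angle hypothesis, but the $H^1$ bound will; this asymmetry is reflected in the two different scaling arguments below.

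For the $L^2$ estimate, I would fix a reference triangle $\hat T$ and take an affine bijection $F_T(\hat x) = B_T \hat x + A$ mapping $\hat T \to T$, so that $\hat v := v \circ F_T$ satisfies $\hat \pi \hat v = (\pi_h v) \circ F_T$ for the reference-triangle nodal interpolant $\hat \pi$. Bramble--Hilbert on $\hat T$ yields $\|\hat v - \hat \pi \hat v\|_{L^2(\hat T)} \lesssim |\hat v|_{H^2(\hat T)}$. Pulling back to $T$, the left side equals $|\det B_T|^{-1/2}\|v - \pi_h v\|_{L^2(T)}$, while $|\hat v|_{H^2(\hat T)} \leq \|B_T\|^2 |\det B_T|^{-1/2} |v|_{H^2(T)}$; the $|\det B_T|^{-1/2}$ factors cancel, and since both edge vectors $B-A$ and $C-A$ have length at most $h$ one has $\|B_T\| \lesssim h$, so the claimed bound follows with a constant independent of the shape of $T$.

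The $H^1$ estimate is the main obstacle, because the same naive change of variables produces a factor $\|B_T^{-1}\|$ of order $1/h_{\min}$ that blows up for thin triangles. To circumvent this, I would decompose $\nabla(v - \pi_h v)$ in the basis of unit tangents $\tau_1, \tau_2$ to two edges of $T$. Under $Maxac(\psi)$ the angle between $\tau_1$ and $\tau_2$ is bounded away from $0$ and $\pi$, so the coefficients of this decomposition are controlled by a constant depending only on $\psi$, and it then suffices to bound $\|\partial_{\tau_i}(v - \pi_h v)\|_{L^2(T)}$ for $i=1,2$. Since $v - \pi_h v$ vanishes at the two endpoints of each edge, $\partial_{\tau_i}(v - \pi_h v)$ has zero integral along every segment parallel to that edge and contained in $T$; applying 1D Poincar\'e on each such segment and integrating in the transverse direction (Fubini) yields the desired $O(h)$ bound, with the $C(\psi)$ constant arising only from the angle between $\tau_1$ and $\tau_2$.
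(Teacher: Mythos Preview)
The paper does not supply a proof of this lemma; it is quoted from Babu\v{s}ka--Aziz (1976) without argument, so there is no in-paper proof to compare against. On the merits: your $L^2$ argument is correct, and your overall strategy for the $H^1$ estimate---bound $\partial_{\tau_i}(v-\pi_h v)$ for two edge directions $\tau_1,\tau_2$ separately, then invoke the maximum-angle condition only to invert the $(\tau_1,\tau_2)$ frame---is exactly the Babu\v{s}ka--Aziz plan. But the key step is misstated. You assert that $\partial_{\tau_i}(v-\pi_h v)$ has zero integral along \emph{every} segment in $T$ parallel to edge $e_i$. This is false: the fundamental theorem of calculus gives $\int \partial_{\tau_i}(v-\pi_h v)\,ds = (v-\pi_h v)(Q)-(v-\pi_h v)(P)$, and for a segment strictly interior to $T$ the endpoints $P,Q$ lie on the other two edges, where $v-\pi_h v$ has no reason to vanish. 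The zero-mean property holds only on $e_i$ itself, so the slice-by-slice 1D Poincar\'e/Fubini argument does not go through.

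The way Babu\v{s}ka--Aziz actually close this step is different. Since $\pi_h v$ is affine, $\partial_{\tau_i}\pi_h v$ is the constant $c_i=|e_i|^{-1}\int_{e_i}\partial_{\tau_i}v\,ds$, so $\partial_{\tau_i}(v-\pi_h v)=w-c_i$ with $w=\partial_{\tau_i}v$. On the unit reference triangle the map $w\mapsto w-\int_{\hat e_i}w\,ds$ is bounded $H^1\to L^2$ (trace theorem) and annihilates constants, so Bramble--Hilbert gives $\|\partial_{\hat\tau_i}(\hat v-\widehat{\pi v})\|_{L^2(\hat T)}\lesssim |\partial_{\hat\tau_i}\hat v|_{H^1(\hat T)}$; scaling back, the edge-length and Jacobian factors cancel and one obtains $\|\partial_{\tau_i}(v-\pi_h v)\|_{L^2(T)}\lesssim h\,|v|_{H^2(T)}$ with a constant independent of the shape of $T$. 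One further point you glossed over: for the frame-inversion step you need the angle between $\tau_1$ and $\tau_2$ bounded away from both $0$ and $\pi$, and under $Maxac(\psi)$ this is guaranteed only if $\tau_1,\tau_2$ are taken along the two edges meeting at the vertex of \emph{largest} angle (that angle then lies in $[\pi/3,\psi]$); for an arbitrary pair of edges the enclosed angle can still be arbitrarily small.
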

\begin{lemma}[\cite{Acosta2000}, Theorem 4.7]\label{quaderror}
Let $K$ be a convex quadrilateral with diameter $h$, if $v\in H^2(T)$, there exists a constant $C$ independent of $K$ such that
\begin{equation}
\|v-\pi_h v\|_{L^2(K)}\leq C h^2|v|_{H^2(K)},
\end{equation}
and, if $K$ satisfies $RDP(N,\psi)$, then there exists a constant $C(N,\psi)$ which depends on $N$ and $\psi$ such that
\begin{equation}
|v-\pi_h v|_{H^1(K)}\leq C(N,\psi)h|v|_{H^2(K)}.
\end{equation}
\end{lemma}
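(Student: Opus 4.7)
The plan is to reduce both bounds to estimates on triangles by exploiting the regular decomposition of $K$. Write $K=T_1\cup T_2$ along the diagonal $d_1$; by $RDP(N,\psi)$ each $T_i$ satisfies $Maxac(\psi)$ and $|d_2|/|d_1|\leq N$. The $L^2$ estimate and the $H^1$ estimate are then treated with different scaling arguments, since the anisotropy forces us to avoid any step that would require shape regularity of $K$.

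For the $L^2$ bound, I would use the reference-element argument via the bilinear map $F_K:\hat K\to K$ defined in the paper. Setting $\hat v=v\circ F_K$ and $\hat\pi$ the bilinear interpolant on $\hat K$, the Bramble-Hilbert lemma on the fixed reference square gives $\|\hat v-\hat\pi\hat v\|_{L^2(\hat K)}\lesssim |\hat v|_{H^2(\hat K)}$. Pulling back via $F_K$, we bound $|\det DF_K|\lesssim h^2$ and control the second derivatives of $\hat v$ by those of $v$ using $\|DF_K\|_\infty\lesssim h$. Because $L^2$ only involves $|\det DF_K|$ and not its inverse, no angle condition enters, which explains the $\psi$-free constant in the first inequality.

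For the $H^1$ bound, the reference-element map cannot be inverted uniformly on anisotropic quadrilaterals. Instead, I would decompose on each $T_i$ as
\[
v-\pi_h v=(v-\pi_{T_i}v)+(\pi_{T_i}v-\pi_h v),
\]
where $\pi_{T_i}v$ is the linear Lagrange interpolant on $T_i$. Since $T_i$ satisfies $Maxac(\psi)$, Lemma \ref{trierror} immediately gives $|v-\pi_{T_i}v|_{H^1(T_i)}\lesssim h|v|_{H^2(T_i)}$, with a constant depending only on $\psi$.

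The main obstacle is the second term $w_i:=\pi_{T_i}v-\pi_h v$ on $T_i$: both interpolants agree with $v$ at the three vertices of $T_i$, so $w_i$ vanishes at those three points, but on $T_i$ it is the difference of a linear function and the restriction of the bilinear $\pi_h v$, which is not itself linear. The key observation is that $w_i$ can be written as $(\pi_{T_i}v(M_4)-v(M_4))\,\phi_4|_{T_i}$, where $M_4$ is the vertex of $K$ opposite to $T_i$ and $\phi_4$ is the associated nodal basis function. Estimating $|\phi_4|_{H^1(T_i)}$ in terms of $h$ and the geometry of $K$ (here $N$ enters, because $|d_2|$ may be as large as $N|d_1|$), and bounding the nodal error $|\pi_{T_i}v(M_4)-v(M_4)|$ by an $H^2$ trace/Taylor argument along the segment from $M_4$ to its projection onto $T_i$, yields $|w_i|_{H^1(T_i)}\lesssim C(N,\psi)\,h\,|v|_{H^2(K)}$. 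Summing over $i=1,2$ and combining with the first term gives the claimed $H^1$ estimate; I expect this last step, specifically controlling $|\phi_4|_{H^1(T_i)}$ without shape regularity, to be where all the subtlety of the Acosta–Duran argument is concentrated.
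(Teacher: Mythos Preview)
The paper does not prove this lemma at all; it is quoted verbatim as Theorem~4.7 of Acosta--Dur\'an \cite{Acosta2000} and used as a black box. So there is no paper-side argument to compare against.

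On the merits of your sketch: the $H^1$ strategy is essentially the Acosta--Dur\'an argument. The decomposition $v-\pi_h v=(v-\pi_{T_i}v)+w_i$ with $w_i=\bigl(\pi_{T_i}v(M_4)-v(M_4)\bigr)\phi_4$ is exactly their mechanism, and you have correctly located where the dependence on $N$ and $\psi$ enters (the gradient of $\phi_4$ on $T_i$ and the pointwise error at the opposite vertex).

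Your $L^2$ argument, however, has a genuine gap. The claim that ``$L^2$ only involves $|\det DF_K|$ and not its inverse'' is false: after Bramble--Hilbert on $\hat K$ you must bound $|\hat v|_{H^2(\hat K)}$ by $|v|_{H^2(K)}$, and the change of variables $\int_{\hat K}|D^2 v\circ F_K|^2\,d\hat x=\int_K|D^2v|^2\,|\det DF_K|^{-1}\,dx$ forces $(\min_{\hat K}|\det DF_K|)^{-1}$ into the constant. For a convex quadrilateral the Jacobian is affine and positive, but its minimum equals twice the area of the smallest of the four corner triangles, which can be arbitrarily small relative to $h^2$ even under convexity alone (take one vertex approaching the line through its two neighbours). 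Hence the reference-element route cannot produce a constant independent of $K$. The $L^2$ bound in \cite{Acosta2000} is obtained by the same triangle decomposition you use for $H^1$, exploiting that the triangle $L^2$ estimate in Lemma~\ref{trierror} requires no angle condition; you should run your $w_i$ argument in $L^2$ as well rather than switch to the reference map.
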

\begin{figure}[H]
\centering
  \includegraphics[width=0.45\textwidth]{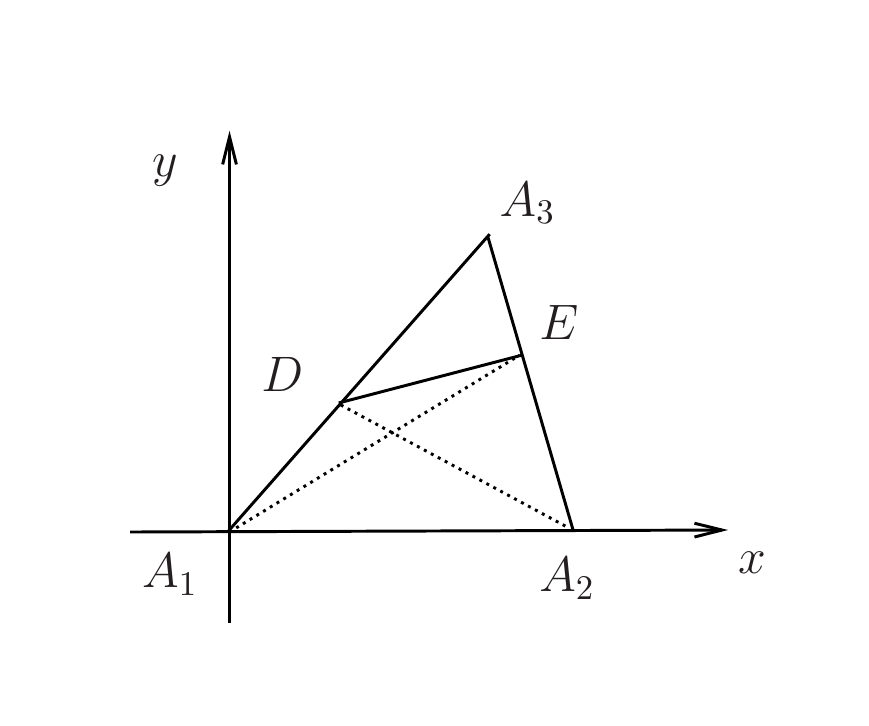}
  \caption{An interface element in $\mathcal{U}_h^I$.}\label{fig.element}
\end{figure}
\begin{lemma}\label{rdp}
If $\mathcal{U}_h$ satisfies $Minac(\alpha)$, then for an arbitrary interface element $T:=\triangle A_1A_2A_3\in \mathcal{U}_h^I$ (see Figure \ref{fig.element}), quadrilateral $A_1 A_2 ED$ satisfies $RDP(N_\alpha,\psi_\alpha)$, where $N_\alpha$ and $\psi_\alpha$ depend only on $\alpha$.
\end{lemma}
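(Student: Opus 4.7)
The plan is to choose the splitting diagonal of the quadrilateral $Q = A_1A_2ED$ adaptively, based on the relative positions of the intersection points $D\in\overline{A_1A_3}$ and $E\in\overline{A_2A_3}$, and then to verify the two clauses of $RDP(N,\psi)$ separately.

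First I would introduce the parameters $t_1 = |A_1D|/|A_1A_3|$ and $t_2 = |A_2E|/|A_2A_3|$; by symmetry, after swapping the labels $(A_1,D)\leftrightarrow(A_2,E)$ if necessary, I may assume $t_1\le t_2$ — equivalently, $D$ sits at a lower height above $\overline{A_1A_2}$ than $E$. Under this convention I would take $d_1 := \overline{A_2D}$ and $d_2 := \overline{A_1E}$, so that $Q$ is split into the two triangles $\triangle A_1A_2D$ and $\triangle A_2DE$. The ratio bound $|d_2|/|d_1|\le N_\alpha$ is the easy half: since $\mathcal{U}_h$ satisfies $Minac(\alpha)$, all three angles of $T$ lie in $[\alpha,\pi-2\alpha]$, so by the law of sines the sides of $T$ are pairwise comparable with constants depending only on $\alpha$. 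Combining $|d_1|=|A_2D|\ge \mathrm{dist}(A_2,\overline{A_1A_3}) = |A_1A_2|\sin\angle A_1\ge |A_1A_2|\sin\alpha$ with $|d_2|=|A_1E|\le \mathrm{diam}(T)$ then yields the desired bound.

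Bounding the angles of $\triangle A_1A_2D$ is direct: its angle at $A_1$ equals the full angle $\angle A_1$ of $T$ (because $D\in \overline{A_1A_3}$), its angle at $A_2$ is a sub-angle of $\angle A_2\le \pi-2\alpha$, and therefore the remaining angle at $D$ is at most $\pi-\alpha$. Two of the three angles of $\triangle A_2DE$ are also easy: the angle at $A_2$ equals $\angle DA_2A_3$, a sub-angle of $\angle A_2\le \pi-2\alpha$, and the angle at $D$ is a sub-angle of $\angle A_2DA_3 = \pi-\angle A_3-\angle DA_2A_3\le \pi-\angle A_3\le \pi-\alpha$.

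The main obstacle — and the single place where the choice $t_1\le t_2$ is genuinely used — is controlling the angle at $E$ in $\triangle A_2DE$, since an arbitrary choice of diagonal really does allow this angle to approach $\pi$. Using collinearity of $A_2,E,A_3$ I would rewrite $\angle A_2ED = \pi-\angle A_3ED$, reducing the task to a lower bound on $\angle A_3ED$. The law of sines in $\triangle A_3DE$ gives
\[
\sin\angle A_3ED \;=\; \frac{|A_3D|\sin\angle A_3}{|DE|},
\]
and combined with the triangle inequality $|DE|\le |A_3D|+|A_3E|$ this reduces the problem to a lower bound on $|A_3D|/|A_3E|$. That ratio factors as
\[
\frac{|A_3D|}{|A_3E|} \;=\; \frac{1-t_1}{1-t_2}\cdot\frac{|A_1A_3|}{|A_2A_3|};
\]
the first factor is $\ge 1$ by the assumption $t_1\le t_2$, and the second is bounded below by a constant $c(\alpha)>0$ by the law of sines in $T$. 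This yields $\sin\angle A_3ED\gtrsim \sin\alpha$, and since also $\angle A_3ED\le \pi-\angle A_3\le \pi-\alpha$, the lower bound $\angle A_3ED\ge \gamma(\alpha)>0$ follows. Consequently $\angle A_2ED\le \pi-\gamma(\alpha)$, and taking $\psi_\alpha := \pi-\min(\alpha,\gamma(\alpha))$ completes $Maxac(\psi_\alpha)$ for both sub-triangles and hence the proof.
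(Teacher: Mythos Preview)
Your proof is correct and follows essentially the same route as the paper's: the same symmetry reduction (your $t_1\le t_2$ is equivalent to the paper's $|A_3D|/|A_3A_1|\ge|A_3E|/|A_3A_2|$), the same splitting diagonal $d_1=A_2D$, and the same two sub-triangles. The only difference is in handling the angle at $E$: where you bound $\sin\angle A_3ED$ via the law of sines and the triangle inequality, the paper observes more directly that the symmetry assumption forces $\angle A_3ED\ge\angle A_3A_2A_1\ge\alpha$ (both angle pairs $\{\angle A_3ED,\angle A_3DE\}$ and $\{\angle A_2,\angle A_1\}$ sum to $\pi-\angle A_3$, and the side-ratio comparison determines which is larger), which yields the explicit constant $\psi_\alpha=\pi-\alpha$.
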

\begin{proof}
Without loss of generality, assume $A_1$ is the origin and line $A_1 A_2$ lies on the $x$-axis. Let the coordinates at $A_1, A_2, A_3, D, E$ be
\begin{equation*}
(0, 0), (x_2, 0), (x_3, y_3), (x_D, y_D), (x_E, y_E),
\end{equation*}
respectively, and suppose $\frac{|A_3 D|}{|A_3 A_1|}\geq\frac{|A_3 E|}{|A_3 A_2|}$, see Figure \ref{fig.element} for an illustration.
Divide quadrilateral $A_1 A_2 DE$ into two triangles, $\triangle A_1A_2D$ and $\triangle A_2ED$. Since $\mathcal{U}_h$ satisfies $Minac(\alpha)$, it follows that $\alpha\leq\angle DA_1A_2\leq\pi-\alpha$. Hence triangle $\triangle A_1A_2D$ satisfies $Maxac(\pi-\alpha)$. Since $\frac{|A_3 D|}{|A_3 A_1|}\geq\frac{|A_3 E|}{|A_3 A_2|}$, it follows that $\angle A_3ED\geq \angle A_3A_2A_1$. And consequently
\begin{equation*}
\alpha\leq\angle A_1A_3A_2< \angle DEA_2=\pi-\angle A_3ED\leq \pi-\angle A_3A_2A_1\leq\pi-\alpha,
\end{equation*}
thus triangle $\triangle A_2ED$ satisfies $Maxac(\pi-\alpha)$.
Moreover,
\begin{eqnarray*}
h \sin\alpha \lesssim |A_1A_3||\sin\angle A_1A_3A_2|\leq |A_1E|\leq \max\{|A_1A_2|,|A_1A_3|\}\lesssim h,\\
h \sin\alpha \lesssim |A_2A_3||\sin\angle A_1A_3A_2|\leq |A_2D|\leq \max\{|A_1A_2|,|A_2A_3|\}\lesssim h.
\end{eqnarray*}
Therefore, it is easy to derive that
\begin{equation*}
\frac{|A_1E|}{|A_2D|}\leq \frac{C}{\sin\alpha}.
\end{equation*}
Let $\psi_\alpha=\pi-\alpha$ and $N_\alpha=\frac{C}{\sin\alpha}$, it completes the proof.
\end{proof}

\begin{lemma}
Assume $\mathcal{U}_h$ is a quasi-uniform mesh which satisfies $Minac(\alpha)$, and $\mathcal{F}_h$ is the interface-fitted mesh generated from $\mathcal{U}_h$ as in Figure \ref{mesh1}. Then, for any triangle $T\in \mathcal{F}_h$,
\begin{equation*}
T~\text{satisfies}~ Maxac(\psi_\alpha),
\end{equation*}
and any quadrilateral $K\in \mathcal{F}_h$,
\begin{equation*}
K~\text{satisfies}~ RDP(N_\alpha,\psi_\alpha).
\end{equation*}
\end{lemma}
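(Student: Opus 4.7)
My plan is to classify the elements of $\mathcal{F}_h$ by how they arise from $\mathcal{U}_h$, and then handle each class in turn. Every element of $\mathcal{F}_h$ either lies away from the interface (in which case it is an unchanged triangle of $\mathcal{U}_h$), or it results from cutting an interface triangle $T = \triangle A_1 A_2 A_3 \in \mathcal{U}_h^I$ by the chord $\overline{DE}$ connecting the two intersection points of $\Gamma$ with $\partial T$ (as in Figure \ref{fig.element}). In that generic situation $\Gamma$ exits two distinct edges of $T$, so the cut produces exactly one quadrilateral $A_1 A_2 E D$ and one triangle $\triangle A_3 D E$. I would briefly note that, since $\mathcal{U}_h$ is assumed quasi-uniform and $\Gamma$ is $C^2$, for $h$ small enough $\Gamma$ intersects $\partial T$ transversely at exactly two points on two different edges, ruling out degenerate splittings.

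For the non-interface triangles the statement is immediate: they satisfy $Minac(\alpha)$, hence $Maxac(\pi-2\alpha)\subset Maxac(\pi-\alpha)=Maxac(\psi_\alpha)$. For the interface quadrilaterals of the form $A_1A_2ED$, Lemma \ref{rdp} directly gives the $RDP(N_\alpha,\psi_\alpha)$ property. So the only genuine work is to bound the angles of the interface triangle $\triangle A_3 DE$ by $\psi_\alpha=\pi-\alpha$.

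For that triangle I would reuse the angle bookkeeping from the proof of Lemma \ref{rdp}. First, $\angle D A_3 E = \angle A_1 A_3 A_2 \in [\alpha,\pi-2\alpha]$ by $Minac(\alpha)$ applied to $T$, so this angle is at most $\pi-\alpha$. Next, assuming WLOG $\tfrac{|A_3D|}{|A_3A_1|}\ge\tfrac{|A_3E|}{|A_3A_2|}$ as in Lemma \ref{rdp}, the same ratio argument used there gives $\angle A_3 E D \ge \angle A_3 A_2 A_1 \ge \alpha$, together with $\pi-\angle A_3ED = \angle DEA_2 > \angle A_1A_3A_2 \ge \alpha$, i.e.\ $\angle A_3 E D \le \pi - \alpha$. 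Finally the remaining angle is
\begin{equation*}
\angle A_3 D E \;=\; \pi-\angle D A_3 E - \angle A_3 E D \;\le\; \pi - 2\alpha \;\le\; \pi-\alpha,
\end{equation*}
since the other two angles are each at least $\alpha$. Thus all three angles of $\triangle A_3 D E$ are bounded by $\psi_\alpha$, and $\triangle A_3 D E$ satisfies $Maxac(\psi_\alpha)$.

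The main conceptual obstacle, though minor, is making sure the case analysis is exhaustive: one must argue that cutting by $\overline{DE}$ really always yields the triangle/quadrilateral pair treated in Lemma \ref{rdp} (no pathological configurations such as $\Gamma$ re-entering $T$, or $D,E$ coinciding with a vertex of $T$). Once this geometric reduction is in place, everything else is a direct appeal to Lemma \ref{rdp} and elementary angle summation, and the two uniform constants $N_\alpha$ and $\psi_\alpha=\pi-\alpha$ are precisely those produced by Lemma \ref{rdp}.
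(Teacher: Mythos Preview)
Your proposal is correct and follows the same route as the paper, which simply invokes Lemma \ref{rdp} (the paper's one-line proof reads ``Combining Theorem \ref{quaderror} with Lemma \ref{rdp} completes the proof''). In fact your write-up is more complete than the paper's: you explicitly verify the $Maxac(\psi_\alpha)$ bound for the small interface triangle $\triangle A_3DE$ via the exterior-angle/ratio argument already used inside Lemma \ref{rdp}, whereas the paper leaves that case implicit.
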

\begin{proof}
Combining Theorem \ref{quaderror} with Lemma \ref{rdp} completes the proof.
\end{proof}
\begin{lemma}\label{strip}
Let $T_*$ be the region enclosed by $\Gamma_T$ and $\Gamma_{h,T}$, see Figure \ref{gammaT}. For any $v\in H^1(T_*)$, it holds
\begin{equation}\label{strip1}
\|v\|_{L^2(T_*)}\lesssim h\|v\|_{L^2(\Gamma_T)}+h^2|v|_{H^1(T_*)}.
\end{equation}
\end{lemma}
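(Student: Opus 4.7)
The plan is to exploit the fact that $T_*$ is a very thin curved strip between the $C^2$ curve $\Gamma_T$ and its chord (or piecewise-linear secant) $\Gamma_{h,T}$, with arc-length $O(h)$ and normal width $O(h^2)$. The inequality is then obtained by a fundamental-theorem-of-calculus argument in the normal direction, together with the trace on $\Gamma_T$ playing the role of the boundary data.

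\textbf{Step 1: local coordinates aligned with $\Gamma_T$.} Parametrize $\Gamma_T$ by arc length $s\in[0,L]$ with $L\lesssim h$, and let $\bm{n}(s)$ be the unit normal pointing into $T_*$. Because $\Gamma$ is $C^2$, the map $\Phi(s,t):=\gamma(s)+t\bm{n}(s)$ is a $C^1$-diffeomorphism from a rectangle $\{(s,t):0\le s\le L,\ 0\le t\le \delta(s)\}$ onto $T_*$ for $h$ small enough, with Jacobian $1-t\kappa(s)$ uniformly bounded above and below by positive constants (the curvature $\kappa$ is bounded on the compact interface). The normal thickness $\delta(s)$ satisfies $\delta(s)\lesssim h^2$ because the chord of a $C^2$ arc of length $O(h)$ deviates from it by at most $O(h^2)$.

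\textbf{Step 2: one-dimensional estimate in the normal direction.} For a.e. $s$, the trace of $v$ on $\Gamma_T$ gives a well-defined value $v(s,0)$, and we write
\begin{equation*}
v(s,t)=v(s,0)+\int_0^t \partial_\tau v(s,\tau)\,d\tau.
\end{equation*}
Applying $(a+b)^2\le 2a^2+2b^2$ and Cauchy--Schwarz in the integral,
\begin{equation*}
|v(s,t)|^2\le 2|v(s,0)|^2+2t\int_0^{\delta(s)}|\partial_\tau v(s,\tau)|^2\,d\tau.
\end{equation*}
Integrating over $t\in(0,\delta(s))$ and using $\delta(s)\lesssim h^2$,
\begin{equation*}
\int_0^{\delta(s)}|v(s,t)|^2\,dt \lesssim h^2|v(s,0)|^2+h^4\int_0^{\delta(s)}|\partial_\tau v(s,\tau)|^2\,d\tau.
\end{equation*}

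\textbf{Step 3: integrate along $\Gamma_T$ and change variables back.} Integrating in $s\in[0,L]$, the first term gives $h^2\|v\|_{L^2(\Gamma_T)}^2$, and the second term gives $h^4$ times the squared $L^2$-norm of $\partial_\tau v$ over the $(s,t)$-rectangle, which by the bounded Jacobian is $\lesssim h^4|v|_{H^1(T_*)}^2$, since $\partial_\tau v$ is a component of $\nabla v$ transported through the $C^1$-diffeomorphism $\Phi$. Combining and taking square roots yields \eqref{strip1}.

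\textbf{Main obstacle.} The only nontrivial point is justifying the tubular-coordinate change $\Phi$ and controlling the Jacobian uniformly in $h$; this rests on the $C^2$-regularity of $\Gamma$ (bounded curvature) and on the fact that $T_*$ lies within a reach-type neighborhood of $\Gamma_T$ for $h$ sufficiently small. Once $\Phi$ is in hand, the calculation is routine, and the density of $C^1(\overline{T_*})$ in $H^1(T_*)$ is used to make the trace on $\Gamma_T$ and the fundamental-theorem step rigorous for general $v\in H^1(T_*)$.
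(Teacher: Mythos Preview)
Your argument is correct, but it differs from the paper's route. The paper (following Bramble and King) aligns coordinates with the \emph{chord} $\Gamma_{h,T}$ rather than the curve $\Gamma_T$: placing $\Gamma_{h,T}$ on the $x$-axis, the strip becomes $\{0<x\le\epsilon,\ 0\le y\le\eta(x)\}$ with $\eta\lesssim\epsilon^{2}$, $\eta'\lesssim\epsilon$, and one applies the divergence theorem to $\bm{w}=(0,yv^{2})^{T}$. The boundary term on $\Gamma_{h,T}$ vanishes because $y=0$ there, leaving only the $\Gamma_T$ term, and Cauchy--Schwarz plus $\|y\|_{L^\infty}\lesssim h^{2}$ yield the estimate directly.

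The trade-off: your tubular-coordinate argument is geometrically transparent and extends readily to higher dimensions or more general thin layers, but it carries the overhead of justifying the diffeomorphism $\Phi$ and the uniform Jacobian bounds (which you correctly flag as the main obstacle). The paper's divergence-theorem trick sidesteps the curvilinear change of variables entirely, working in flat Cartesian coordinates where the graph representation of $\Gamma_T$ over the chord is immediate from the $C^{2}$ regularity; the price is that the choice of $\bm{w}$ looks ad hoc until one sees it.
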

The proof included below was essentially due to Bramble and King \cite{Bramble1996}.
\begin{proof}
\begin{figure}[H]
\centering
  \includegraphics[width=0.4\textwidth]{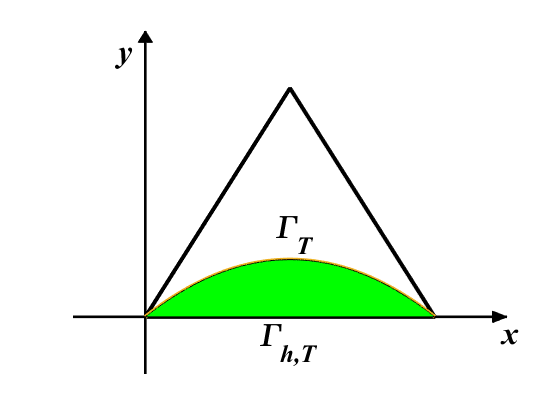}
  \caption{A typical interface element in $\mathcal{F}_h$.}\label{gammaT}
\end{figure}
Assume that $\Gamma_{h,T}$ has its left endpoint at the origin and is given by
\begin{equation*}
\Gamma_{h,T}=\{(x,y)\in T; 0< x\leq \epsilon, y=0\}.
\end{equation*}
Moreover, suppose that $\Gamma_{T}$ can be denoted by
\begin{equation*}
\Gamma_{T}=\{(x,y)\in T; 0< x\leq \epsilon,  y=\eta(x)\},
\end{equation*}
where $\epsilon(\lesssim h)$ is the length of $\Gamma_{h,T}$ and $\eta(x)\in C^{2}(0,\epsilon)$. Since the curvature of $\Gamma$ is bounded, it is known that $\eta(x)\lesssim \epsilon^{2}$ and $\eta^{\prime}(x)\lesssim \epsilon$.
Let $T_*$ be the region enclosed by $\Gamma_T$ and $\Gamma_{h,T}$, by the divergence theorem
\begin{equation*}
\int_{T_*}\nabla{\cdot}\bm{w}dxdy=\int_{\partial T_*}\bm{w}\cdot\bm{n}ds\quad\forall \bm{w}\in H(\mathrm{div};T_*).
\end{equation*}
Let $\bm{w}=(0,yv^{2})^{T}$ with $v\in H^{1}(T_*)$. Then,
\begin{equation*}
\int_{T_*}v^{2}dxdy+\int_{T_*}2yv\frac{\partial v}{\partial y}dxdy=\int_{\Gamma_{T}}yv^{2}(1+(\eta^{\prime}(x))^{2})^{-1/2}ds.
\end{equation*}
Using the Cauchy-Schwarz inequality, it is easy to derive that
\begin{eqnarray*}
\begin{aligned}
\|v\|_{0,T_*}^{2}&\leq C(\|y\|_{0,\infty,\Gamma_{T}}\|v\|_{0,\Gamma_{T}}^{2}+\|y\|_{0,\infty,\Gamma_{T}}\|v\|_{0,T_*}\|\frac{\partial v}{\partial y}\|_{0,T_*})\\
&\leq C\epsilon^{2}\|v\|_{0,\Gamma_{T}}^{2}+C^{2}\epsilon^{4}\|\frac{\partial v}{\partial y}\|_{0,T_*}^{2}+\frac{1}{4}\|v\|_{0,T_*}^{2}.
\end{aligned}
\end{eqnarray*}
Therefore,
\begin{equation*}
\|v\|_{0,T_*}\lesssim h\|v\|_{0,\Gamma_{T}}+h^{2}|v|_{1,T_*}.
\end{equation*}
\end{proof}

The following theorem shows that the generated interface-fitted mesh does not reduce the approximation accuracy in spite of anisotropic elements.
\begin{theorem}\label{interror}
For any $v\in \tilde{H}^2(\Omega_1\cup\Omega_2)$, it holds that
\begin{eqnarray}
\|v-\pi_hv\|_{L^2(\Omega)}\lesssim h^2|v|_{H^2(\Omega_1\cup\Omega_2)},\label{err0}\\
|v-\pi_hv|_{H^1(\Omega)}\lesssim h|v|_{H^2(\Omega_1\cup\Omega_2)}.\label{err1}
\end{eqnarray}
\end{theorem}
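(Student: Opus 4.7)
The approach reduces the anisotropic interpolation error on the interface-fitted mesh to two contributions: the standard interpolation error of a smooth extension of $v$ on each element, plus a geometric consistency error on the thin sliver regions $K_*$ where the fitted polygon $\Gamma_h$ differs from the true interface $\Gamma$. First I would split
\begin{equation*}
\|v-\pi_h v\|_{L^2(\Omega)}^2 = \sum_{T\in \mathcal{F}_h\setminus \mathcal{F}_h^I}\|v-\pi_h v\|_{L^2(T)}^2 + \sum_{K\in \mathcal{F}_h^I}\|v-\pi_h v\|_{L^2(K)}^2,
\end{equation*}
and similarly for the $H^1$ seminorm. On each non-interface element the restriction of $v$ lies in $H^2$ of that element, so Lemmas \ref{trierror}--\ref{quaderror} apply directly; the preceding lemma guarantees that every triangle in $\mathcal{F}_h$ satisfies $Maxac(\psi_\alpha)$ and every quadrilateral satisfies $RDP(N_\alpha,\psi_\alpha)$, so the interpolation constants are uniform.

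Next, I would fix an interface element $K\subset \Omega_{i,h}$ and introduce the smooth proxy $\tilde v := (Ev_i)|_K \in H^2(K)$. Every node of $K$ lies in $\overline{\Omega_i}$ (either a vertex of the parent cell in $\mathcal{U}_h$ that was retained, or an intersection point of $\Gamma$ with the parent cell which lies on $\Gamma\subset \overline{\Omega_1}\cap\overline{\Omega_2}$), so at every such node $v=v_i=Ev_i=\tilde v$, and hence $\pi_h v|_K = \pi_h \tilde v|_K$. A triangle inequality then splits the error as
\begin{equation*}
\|v-\pi_h v\|_{L^2(K)} \le \|\tilde v - \pi_h \tilde v\|_{L^2(K)} + \|v-\tilde v\|_{L^2(K_*)},
\end{equation*}
and analogously for the $H^1$ seminorm. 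The first summand is controlled by Lemmas \ref{trierror}--\ref{quaderror} applied to $\tilde v\in H^2(K)$, producing $h^s|Ev_i|_{H^2(K)}$ with $s=2$ in $L^2$ and $s=1$ in $H^1$; summing over $K$ and invoking $\|Ev_i\|_{H^2(\Omega)}\lesssim \|v_i\|_{H^2(\Omega_i)}$ yields the desired right-hand side.

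For the geometric consistency term I use that on $K_*\subset \Omega_j$ ($j\ne i$) one has $v-\tilde v = Ev_j - Ev_i$, and this difference vanishes on $\Gamma_T := \Gamma\cap K$ because $v\in H^1_0(\Omega)$ is continuous across $\Gamma$. For \eqref{err0} this is exactly the scope of Lemma \ref{strip}: the boundary term is zero, hence $\|v-\tilde v\|_{L^2(K_*)}\lesssim h^2|Ev_j-Ev_i|_{H^1(K_*)}$, and summation gives a contribution $\lesssim h^2\|v\|_{H^2(\Omega_1\cup\Omega_2)}$ by the extension bound. The $H^1$ consistency in \eqref{err1} is the main technical obstacle, since Lemma \ref{strip} does not apply directly to $\nabla(Ev_j-Ev_i)$, whose trace on $\Gamma$ need not vanish. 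My plan is to sum first,
\begin{equation*}
\sum_{K\in \mathcal{F}_h^I} |v-\tilde v|^2_{H^1(K_*)} = \|\nabla(Ev_j-Ev_i)\|^2_{L^2(\Omega_h^I)},
\end{equation*}
and then exploit that $\Omega_h^I$ is a tube of width $O(h^2)$ around $\Gamma$: a strip/trace estimate on each component of $\nabla(Ev_j-Ev_i)$ combined with the global trace bound $\|\nabla(Ev_j-Ev_i)\|_{L^2(\Gamma)}\lesssim \|v\|_{H^2(\Omega_1\cup\Omega_2)}$ should produce the missing factor $h^2$ and close \eqref{err1}.
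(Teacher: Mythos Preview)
Your proposal is correct and follows the paper's route essentially verbatim for the $L^2$ estimate: the same decomposition into non-interface and interface elements, the same replacement of $v$ by the smooth extension $Ev_i$ on an interface element $K\subset\Omega_{i,h}$, the observation $\pi_h v|_K=\pi_h Ev_i|_K$, and the use of Lemma~\ref{strip} on $K_*$ with the vanishing trace $(Ev_j-Ev_i)|_{\Gamma_T}=0$.

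For the $H^1$ estimate the paper simply writes ``\eqref{err1} follows by using the same argument,'' whereas you correctly flag that the sliver term $|Ev_j-Ev_i|_{H^1(K_*)}$ does not fall immediately under Lemma~\ref{strip} because the trace of $\nabla(Ev_j-Ev_i)$ on $\Gamma$ need not vanish. Your plan---applying Lemma~\ref{strip} componentwise to $\nabla(Ev_j-Ev_i)$, retaining the boundary term $h\|\nabla(Ev_j-Ev_i)\|_{L^2(\Gamma_T)}$, and then summing and bounding $\sum_K h^2\|\nabla(Ev_j-Ev_i)\|^2_{L^2(\Gamma_T)}\lesssim h^2\|v\|^2_{H^2(\Omega_1\cup\Omega_2)}$ via the trace theorem on $\Gamma$---is exactly the missing step and closes the argument. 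In this respect your write-up is more careful than the paper's.
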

\begin{proof}
For non-interface elements, the interpolation error estimate is standard. Assume $K\subset \Omega_{h,i}$ is a general (triangle or quadrilateral) interface element belonging to $\mathcal{F}_h^I$. For simplicity, let $v_i = v|_{\Omega_i}$. Since $\pi_h v= \pi_h Ev_i$, it follows that
\begin{eqnarray*}
\begin{aligned}
|v-\pi_h v|_{L^2(K)}^2
&=|v_i-\pi_h v_i|_{L^2(K_i)}^2+|v_j-\pi_h v_i|_{L^2(K_*)}^2\\
&\lesssim |Ev_i-\pi_h Ev_i|_{L^2(K)}^2+|v_j-Ev_i|_{L^2(K_*)}^2\\
&\lesssim h^4|Ev_i|_{H^2(K)}^2+|v_j-Ev_i|_{L^2(K_*)}^2\\
&\lesssim h^4|Ev_i|_{H^2(K)}^2+h^4|v_j-Ev_i|_{H^1(K_*)}^2\\
&\lesssim h^4(\|Ev_i\|_{H^2(K)}^2+\|v_j\|_{H^2(K_*)}^2),
\end{aligned}
\end{eqnarray*}
where $i,j\in \{1,2\}$ and $j\neq i$. Lemma \ref{strip} and the fact $v_1=v_2$ on $\Gamma$ are used in the fourth step. Summing $K$ over $\mathcal{F}_h^I$, it holds
\begin{align*}
\sum_{K\in \mathcal{F}_h^I}|v-\pi_h v|_{L^2(K)}^2\lesssim h^4 \|v\|_{H^2(\Omega_1\cup\Omega_2)}^2.
\end{align*}
Inequality \eqref{err1} follows by using the same augment.
\end{proof}

\subsection{A prior error estimate}
In the following, the Galerkin approximation of \eqref{weak2} is analyzed in consideration of variational crimes. Recall the interface jump condition in the elliptic interface problem \eqref{elliptic},
\begin{equation*}
[\![u]\!]=q,~~~[\![\beta\frac{\partial u}{\partial n_{\Gamma}}]\!]=g ~~~\text{on}~ \Gamma.
\end{equation*}
\begin{lemma}[\cite{Chen1998}, Lemma 2.2]\label{ggh}
Assume $g\in H^2(\Gamma)$, it holds that
\begin{equation}
|\int_{\Gamma}gv_hds-\int_{\Gamma_h}g_hv_hds|\lesssim h^{3/2}\|g\|_{H^2(\Gamma)}|v_h|_{H^1(\Omega)}\quad\forall v_h\in V_h.
\end{equation}
\end{lemma}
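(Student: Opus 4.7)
The plan is to localize the integral contribution to each interface element, use the local parametrization from Lemma~\ref{strip}, and then reduce the problem to estimating three elementary one-dimensional quantities.

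First, I would decompose
\begin{equation*}
\int_\Gamma g v_h\,ds - \int_{\Gamma_h} g_h v_h\,ds
= \sum_{T \in \mathcal{F}_h^I}\Big(\int_{\Gamma_T} g v_h\,ds - \int_{\Gamma_{h,T}} g_h v_h\,ds\Big).
\end{equation*}
On each interface element adopt the local frame from Lemma~\ref{strip}: $\Gamma_{h,T}$ lies on the $x$-axis from $(0,0)$ to $(\epsilon,0)$ with $\epsilon\lesssim h$, while $\Gamma_T = \{(x,\eta(x)) : 0\le x\le\epsilon\}$ with $\eta(0)=\eta(\epsilon)=0$, $|\eta(x)|\lesssim h^2$, $|\eta'(x)|\lesssim h$ (by the $C^2$-regularity of $\Gamma$). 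Crucially, since the interface nodes $O_i$ lie on both $\Gamma$ and $\Gamma_h$, the restriction $g_h|_{\Gamma_{h,T}}$ is exactly the linear interpolant of $g$ at the two shared endpoints.

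Next, introduce the lift $\Phi(x)=(x,\eta(x))$ and split the local difference into three pieces:
\begin{align*}
I_1 &:= \int_0^\epsilon \bigl(g(\Phi(x))-g_h(x,0)\bigr)\, v_h(\Phi(x))\,\sqrt{1+\eta'(x)^2}\, dx,\\
I_2 &:= \int_0^\epsilon g_h(x,0)\, \bigl(v_h(\Phi(x))-v_h(x,0)\bigr)\,\sqrt{1+\eta'(x)^2}\, dx,\\
I_3 &:= \int_0^\epsilon g_h(x,0)\, v_h(x,0)\, \bigl(\sqrt{1+\eta'(x)^2}-1\bigr)\, dx.
\end{align*}
For $I_1$, since $g_h\circ\Phi^{-1}$ is the linear interpolant of $g$ along $\Gamma_T$ (up to a $1+O(h^2)$ arclength Jacobian), standard one-dimensional interpolation theory gives $\|g-g_h\circ\Phi^{-1}\|_{L^2(\Gamma_T)}\lesssim h^2\|g\|_{H^2(\Gamma_T)}$. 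Cauchy--Schwarz combined with the trace inequality $\|v_h\|_{L^2(\Gamma)}\lesssim|v_h|_{H^1(\Omega)}$ then produces a contribution of order $h^2\|g\|_{H^2(\Gamma)}|v_h|_{H^1(\Omega)}$ after summation. For $I_3$, use $|\sqrt{1+\eta'^2}-1|\lesssim \eta'^2\lesssim h^2$ together with an analogous trace estimate on $\Gamma_h$ for another $O(h^2)$ bound.

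The binding term is $I_2$, and this is the main obstacle. Because $v_h$ is piecewise linear on triangles and $\mathcal{Q}_1$ on quadrilaterals, $\partial_y^2 v_h=0$ on the fitted element containing the sliver $T_*$, so the identity $v_h(\Phi(x))-v_h(x,0)=\eta(x)\,\partial_y v_h(x,0)$ holds exactly. Combining $|\eta|\lesssim h^2$ with the area bound $|T_*|\lesssim h^3$ (so that $|T_*|\lesssim h|T|$ on shape-regular interface cells) and the constancy of $\nabla v_h$ on triangular cells, one obtains $\|\partial_y v_h\|_{L^2(T_*)}\lesssim h^{1/2}\|\partial_y v_h\|_{L^2(T)}$; summation via Cauchy--Schwarz then yields $|I_2|\lesssim h^{3/2}\|g\|_{H^2(\Gamma)}|v_h|_{H^1(\Omega)}$. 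This is precisely where the exponent $3/2$ originates: although the geometric displacement between $\Gamma$ and $\Gamma_h$ is of order $h^2$, the transport of $v_h$ across the sliver can only be controlled through an $L^2$--$H^1$ combined estimate that provides at most an $h^{1/2}$ improvement over $\|\nabla v_h\|_{L^2(T)}$, so the product cannot do better than $h^{3/2}$.
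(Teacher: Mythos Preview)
The paper does not give its own proof of this lemma; it simply records the statement and cites Chen--Zou (1998), Lemma~2.2. So there is no in-paper argument to compare against. Your localization to interface segments, the splitting into $I_1+I_2+I_3$, and the identification of $I_2$ as the term that limits the rate to $h^{3/2}$ are exactly the standard strategy behind that reference.

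There is, however, a technical slip in your treatment of $I_2$. You assert that $\partial_y^2 v_h=0$ on the fitted element containing the sliver, and then use the exact Taylor identity $v_h(\Phi(x))-v_h(x,0)=\eta(x)\,\partial_y v_h(x,0)$. This is fine when the interface element is a triangle ($\mathcal{P}_1$), but it is \emph{not} true for the isoparametric $\mathcal{Q}_1$ space on a general convex quadrilateral: the shape functions $\phi_i=\hat{\phi}_i\circ F_K^{-1}$ are not polynomials, so neither $\partial_y^2 v_h=0$ nor the exact first-order identity holds. Relatedly, your phrase ``shape-regular interface cells'' is inaccurate here, since the interface cells in $\mathcal{F}_h$ are precisely the anisotropic ones; the area ratio $|T_*|/|K|\lesssim h$ does still hold (the chord--arc sliver has width $\lesssim\epsilon^2$ while $K$ has height $\gtrsim\epsilon$ transverse to $\Gamma_{h,T}$), but the passage from $\|\partial_y v_h\|_{L^2(T_*)}$ to $h^{1/2}\|\partial_y v_h\|_{L^2(K)}$ via ``constancy of $\nabla v_h$'' is only valid on the triangular pieces.

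The fix is minor: replace the exact identity by the fundamental theorem of calculus, $v_h(x,\eta(x))-v_h(x,0)=\int_0^{\eta(x)}\partial_y v_h(x,t)\,dt$, and then either (i) apply Lemma~\ref{strip} to $\partial_y v_h$ itself (it lies in $H^1(T_*)$ elementwise), or (ii) use an anisotropic inverse estimate for $\mathcal{Q}_1$ functions on $RDP(N,\psi)$ quadrilaterals to control $\|\nabla v_h\|_{L^\infty(K)}$ by $|K|^{-1/2}\|\nabla v_h\|_{L^2(K)}$. Either route recovers the $h^{3/2}$ factor without the incorrect second-derivative claim.
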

If we don't use the approximation $g_h$ for $g$ in Problem \eqref{weak2}, then $g\in H^{1/2}(\Gamma)$ is enough for the following error analysis.

\begin{lemma}\label{zh}
Assume $q\in H^{3/2}(\Gamma)$, it holds that
\begin{align}
&|z-z_h|_{H^1(\Omega_{2,h})}\lesssim h|z|_{H^2(\Omega_{2,h})},\label{z1}\\
&|\int_{\Omega_{2,h}}\beta_2\nabla z_h\cdot\nabla v_hdx - \int_{\Omega_2}\beta_2\nabla z\cdot\nabla v_hdx|\lesssim \beta_2h\|u\|_{\tilde{H}^2(\Omega_1\cup\Omega_2)}|v_h|_{H^1(\Omega)}\label{z2}.
\end{align}
\end{lemma}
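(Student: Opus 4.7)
The plan is to compare $z$ and $z_h$ against a common globally $H^2$ object $\tilde z := Ez_0$, the Sobolev extension of $z_0$ to all of $\Omega$. A preliminary observation is that every vertex of $\mathcal{F}_h$ in $\overline{\Omega_{2,h}}$ is either an original vertex of $\mathcal{U}_h$ lying in $\overline{\Omega_2}$ or an intersection point of $\Gamma$ with $\mathcal{U}_h$ (hence on $\Gamma$); consequently $z$ and $\tilde z$ agree at all these nodes, so $z_h = \pi_h z = \pi_h\tilde z$ on $\Omega_{2,h}$.

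For \eqref{z1}, I would split $\Omega_{2,h} = A\cup B$ with $A := \Omega_{2,h}\cap\Omega_2$ and $B := \Omega_{2,h}\setminus\Omega_2\subset\Omega_h^I$. On $A$ we have $z = z_0 = \tilde z$, and the element-wise anisotropic interpolation bounds of Lemmas \ref{trierror} and \ref{quaderror} give $|z - z_h|_{H^1(A)} = |\tilde z - \pi_h\tilde z|_{H^1(A)} \lesssim h|\tilde z|_{H^2(\Omega)}$. On the thin strip $B$, $z\equiv 0$, so
\begin{equation*}
|z - z_h|_{H^1(B)} = |\pi_h\tilde z|_{H^1(B)} \leq |\pi_h\tilde z - \tilde z|_{H^1(B)} + |\tilde z|_{H^1(B)}.
\end{equation*}
The first summand is again controlled by the anisotropic interpolation bounds. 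The second is handled by applying Lemma \ref{strip} to each Cartesian component of $\nabla\tilde z$ on each interface sub-region $K_*$, summing the contributions, and invoking the trace inequality $\|\nabla\tilde z\|_{L^2(\Gamma)}\lesssim\|\tilde z\|_{H^2(\Omega)}$, which yields $|\tilde z|_{H^1(B)}\lesssim h\|\tilde z\|_{H^2(\Omega)}$. Combined with the extension estimate $\|\tilde z\|_{H^2(\Omega)}\lesssim\|z_0\|_{H^2(\Omega_2)}$ (and the identification of this with $|z|_{H^2(\Omega_{2,h})}$ on the relevant piece), \eqref{z1} follows.

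For \eqref{z2}, I would introduce $C := \Omega_2\setminus\Omega_{2,h}\subset\Omega_h^I$ and decompose
\begin{equation*}
\int_{\Omega_{2,h}}\beta_2\nabla z_h\cdot\nabla v_h\,dx - \int_{\Omega_2}\beta_2\nabla z\cdot\nabla v_h\,dx = \int_{\Omega_{2,h}}\beta_2\nabla(z_h-\tilde z)\cdot\nabla v_h\,dx + \beta_2\int_{B}\nabla\tilde z\cdot\nabla v_h\,dx - \beta_2\int_{C}\nabla\tilde z\cdot\nabla v_h\,dx,
\end{equation*}
using $\tilde z = z$ on $\Omega_2$. The first term is bounded by Cauchy-Schwarz and the anisotropic interpolation estimate, giving $\lesssim \beta_2 h|\tilde z|_{H^2(\Omega)}|v_h|_{H^1(\Omega)}$. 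The two strip integrals are bounded by Cauchy-Schwarz combined with the trivial estimate $\|\nabla v_h\|_{L^2(\Omega_h^I)}\leq|v_h|_{H^1(\Omega)}$ and the strip bound $\|\nabla\tilde z\|_{L^2(\Omega_h^I)}\lesssim h\|\tilde z\|_{H^2(\Omega)}$ already established for \eqref{z1}. Finally, the chain $\|\tilde z\|_{H^2(\Omega)}\lesssim\|z_0\|_{H^2(\Omega_2)}\lesssim\|q\|_{H^{3/2}(\Gamma)}\lesssim\|u\|_{\tilde{H}^2(\Omega_1\cup\Omega_2)}$ (the last step by $q=[\![u]\!]$ and the trace theorem applied to $u_1, u_2$ on each sub-domain) delivers the desired inequality.

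The only non-routine step, and what I would single out as the real obstacle, is the global strip estimate $\|\nabla\tilde z\|_{L^2(\Omega_h^I)}\lesssim h\|\tilde z\|_{H^2(\Omega)}$, where Lemma \ref{strip} does the heavy lifting: it has to be applied to each scalar component of $\nabla\tilde z$ on each element sub-region $K_*$, summed, and the boundary-trace term on $\Gamma$ absorbed via the trace inequality. Everything else is bookkeeping around the identification $\pi_h z|_{\Omega_{2,h}} = \pi_h\tilde z|_{\Omega_{2,h}}$ and the anisotropic interpolation lemmas.
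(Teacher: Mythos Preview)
Your argument is correct and follows essentially the same route as the paper: the decomposition into an interpolation error over $\Omega_{2,h}$ plus strip integrals over $\Omega_h^I$, with Lemma~\ref{strip} controlling the latter, is exactly what the paper does (the paper writes the splitting with $z$ rather than the extension $\tilde z$, but since $z\equiv 0$ on $\Omega_{2,h}\setminus\Omega_2$ this is the same computation). Your derivation of \eqref{z1} is in fact more explicit than the paper's, which simply cites Theorem~\ref{interror}; you have unwound that citation by working directly with $\tilde z=Ez_0$ and the identity $\pi_h z=\pi_h\tilde z$ on $\Omega_{2,h}$, which is precisely the mechanism behind that theorem.
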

\begin{proof}
The estimate \eqref{z1} is a direct consequence of Theorem \ref{interror}. Using the triangle inequality, it follows that
\begin{align*}
&|\int_{\Omega_{2,h}}\beta_2\nabla z_h\cdot\nabla v_hdx - \int_{\Omega_2}\beta_2\nabla z\cdot\nabla v_hdx|\\
&= |\int_{\Omega_{2,h}}\beta_2 \nabla(z_h-z)\cdot\nabla v_hdx+\int_{\Omega_{2,h}\setminus\Omega_2}\beta_2\nabla z\cdot\nabla v_h dx-\int_{\Omega_{2}\setminus\Omega_{2,h}}\beta_2\nabla z\cdot\nabla v_h dx|\\
&\lesssim \beta_2(|z-z_h|_{H^1(\Omega_{2,h})}
+|z|_{H^1(\Omega_h^I)})|v_h|_{H^1(\Omega)}\\
&\lesssim \beta_2h|u|_{\tilde{H}^2(\Omega_1\cup\Omega_2)}|v_h|_{H^1(\Omega)}.
\end{align*}
\end{proof}

\begin{theorem}
Assume $u\in \tilde{H}^2(\Omega_1\cup\Omega_2)$ is the solution of Problem \eqref{elliptic} and $u_h$ is defined by \eqref{weakrelation}, there exists a constant $C_\beta$ depending on $\beta$ such that
\begin{eqnarray}
|u-u_h|_{H^1(\Omega)}\leq C_\beta h(\|u\|_{\tilde{H}^2(\Omega_1\cup\Omega_2)}+\|g\|_{H^2(\Gamma)}),\label{H1e}\\
\|u-u_h\|_{L^2(\Omega)}\leq C_\beta h^2(\|u\|_{\tilde{H}^2(\Omega_1\cup\Omega_2)}+\|g\|_{H^2(\Gamma)}).\label{L2e}
\end{eqnarray}
\end{theorem}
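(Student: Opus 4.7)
The plan is to prove the $H^1$ estimate first via a Strang--type argument, then derive the $L^2$ estimate by duality. Invoking Lemma \ref{uh}, I would write $u - u_h = (\tilde{u} - \tilde{u}_h) + (z - z_h)$, where the second difference is already $O(h)$ in $H^1$ by Lemma \ref{zh}. Thus the heart of the matter is to estimate $|\tilde{u} - \tilde{u}_h|_{H^1(\Omega)}$.

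For the $H^1$ part, I would begin with coercivity of $a_h$ on $V_h$ (with coercivity constant $\min(\beta_1,\beta_2)$), reduce to $\pi_h \tilde{u} - \tilde{u}_h$ via a triangle inequality, and observe that
$$
a_h(\pi_h \tilde{u} - \tilde{u}_h, v_h) = a_h(\pi_h \tilde{u} - \tilde{u}, v_h) + \bigl(a_h(\tilde{u}, v_h) - a(\tilde{u}, v_h)\bigr) + \bigl(F(v_h) - \tilde{F}_h(v_h)\bigr)
$$
for every $v_h \in V_h$, after subtracting the continuous identity $a(\tilde{u}, v_h) = F(v_h)$. The first term is handled directly by Theorem \ref{interror}. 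The second is the mismatch between $a$ and $a_h$, supported on the strip $\Omega_h^I$ where $\Omega_i$ and $\Omega_{i,h}$ disagree; it is controlled by Lemma \ref{strip} together with the extension estimate for $E\tilde{u}_i$. The third term is split using Lemma \ref{ggh} for the $g$ versus $g_h$ discrepancy on $\Gamma$ versus $\Gamma_h$, and Lemma \ref{zh} for the $z$ versus $z_h$ discrepancy; both contribute $O(h)$. Assembling the three pieces yields an $O(h)$ bound on $|\tilde{u} - \tilde{u}_h|_{H^1}$, and hence on $|u - u_h|_{H^1}$.

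For the $L^2$ estimate I would use an Aubin--Nitsche duality argument. Let $w \in \tilde{H}^2(\Omega_1\cup\Omega_2)$ solve the dual interface problem with right-hand side $u - u_h$ and homogeneous jumps; the Huang--Zou regularity cited in the introduction gives $\|w\|_{\tilde{H}^2(\Omega_1\cup\Omega_2)} \lesssim \|u-u_h\|_{L^2(\Omega)}$. Then
$$
\|u-u_h\|_{L^2(\Omega)}^2 = a(u - u_h, w) = a(u - u_h, w - \pi_h w) + a(u - u_h, \pi_h w),
$$
and the second term is re-expressed by swapping in $a_h$ and $\tilde{F}_h$ so that Galerkin orthogonality for the discrete problem can be applied, leaving only consistency-error remainders. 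Each remainder admits an extra factor of $h$ when tested against $\pi_h w$, courtesy of Theorem \ref{interror} applied to $w$, and when multiplied by the $O(h)$ $H^1$ bound already established, produces the asserted $O(h^2)$ decay.

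The main obstacle will be the careful accounting of the consistency error on the thin strip $\Omega_h^I$. Because $\tilde{u}$ is only piecewise $H^2$, the integrands on $\Omega_h^I$ must be treated by first applying the extension operator $E$ on each side of $\Gamma$, then invoking Lemma \ref{strip} (with traces on $\Gamma$) to convert strip integrals into an $h\,\|{\cdot}\|_{L^2(\Gamma)} + h^2\,|{\cdot}|_{H^1(\Omega_h^I)}$ combination of the extended functions. The trace terms are absorbed using the interface matching $[\![\tilde{u}]\!]=0$ on $\Gamma$ and $[\![\beta \partial_n \tilde{u}]\!] = g-[\![\beta\partial_n z]\!]$, while the $H^1$ strip norms are $O(h)$ smaller than the global norms. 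This bookkeeping is precisely what supplies the correct powers of $h$ in both the $H^1$ and $L^2$ estimates, with the $\beta$-dependence collected into $C_\beta$.
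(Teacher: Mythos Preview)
Your proposal is correct and follows essentially the same route as the paper: decompose $u-u_h=(\tilde u-\tilde u_h)+(z-z_h)$, handle the second piece by Lemma~\ref{zh}, and treat the first via a Strang--type splitting into the interpolation error (Theorem~\ref{interror}), the $a_h$ versus $a$ mismatch on the strip $\Omega_h^I$ (Lemma~\ref{strip}), and the $F$ versus $\tilde F_h$ discrepancy (Lemmas~\ref{ggh} and~\ref{zh}); the $L^2$ bound is then obtained by duality, exactly as the paper indicates. Your ``main obstacle'' paragraph slightly overcomplicates the strip estimate---the paper simply bounds $|\tilde u|_{H^1(\Omega_h^I)}\lesssim h\|u\|_{\tilde H^2(\Omega_1\cup\Omega_2)}$ by applying Lemma~\ref{strip} to $\nabla\tilde u$ and the trace theorem, without invoking the jump conditions---but this does not affect the validity of the argument.
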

\begin{proof}
By using Lemma \ref{uh} and the triangle inequality, it yields that
\begin{align*}
|u-u_h|_{H^1(\Omega)}&= |\tilde{u}+z-\tilde{u}_h- z_{h}|_{H^1(\Omega)}\\
&\leq |\tilde{u}-\tilde{u}_h|_{H^1(\Omega)}+|z- z_h|_{H^1(\Omega)}.
\end{align*}
The term $|z-z_h|_{H^1(\Omega)}$ is already analyzed in Lemma \ref{zh}. By the standard analysis, it follows that
\begin{equation*}
|\tilde{u}-\tilde{u}_h|_{H^1(\Omega)}\leq |\tilde{u}-\pi_h \tilde{u}|_{H^1(\Omega)}+\min\{\beta_1^{-1},\beta_2^{-1}\}\sup\limits_{v_h\in V_h}\frac{a_h(\tilde{u}-\tilde{u}_h,v_h)}{|v_h|_{H^1(\Omega)}}.
\end{equation*}
The first term is the interpolation error proved in Theorem \ref{interror}. The second term is the consistence error, and it is straightforward to show that
\begin{equation*}
a_h(\tilde{u}-\tilde{u}_h,v_h)= (a_h(\tilde{u},v_h)-a(\tilde{u},v_h))+(F(v_h)-\tilde{F}_h(v_h)).
\end{equation*}
Then, Lemma \ref{strip} and Lemma \ref{ggh} imply that
\begin{eqnarray*}
\begin{aligned}
a_h(\tilde{u},v_h)-a(\tilde{u},v_h)
&=\int_{\Omega_{1}\cap\Omega_{h,2}}(\beta_2-\beta_1)\nabla \tilde{u}\cdot\nabla v_hdx
+\int_{\Omega_{2}\cap\Omega_{h,1}}(\beta_1-\beta_2)\nabla \tilde{u}\cdot\nabla v_hdx\\
&\lesssim |\beta_1-\beta_2| |\tilde{u}|_{H^1(\Omega_h^I)}|v_h|_{H^1(\Omega)}\\
&\lesssim |\beta_1-\beta_2| h\|u\|_{\tilde{H}^2(\Omega_1\cup\Omega_2)}|v_h|_{H^1(\Omega)},
\end{aligned}
\end{eqnarray*}
and
\begin{align*}
F(v_h)-\tilde{F}_h(v_h)&= (\int_\Gamma gv_hds- \int_\Gamma g_hv_hds) + (\int_{\Omega_{2,h}}\beta_2\nabla z_h\cdot\nabla v_hdx - \int_{\Omega_2}\beta_2\nabla z\cdot\nabla v_hdx)\\
&\lesssim h(\|g\|_{H^{3/2}(\Gamma)}+\beta_2\|u\|_{\tilde{H}^2(\Omega_1\cup\Omega_2)})|v_h|_{H^1(\Omega)}.
\end{align*}
The desired result \eqref{H1e} then follows. And the $L^2$-norm error estimate \eqref{L2e} can be proved by using the dual argument.
\end{proof}

\section{A Multi-grid Iterative Method}\label{multigrid}

\subsection{Spaces decomposition}
Suppose $\mathcal{U}_0$ is a quasi-uniform and regular triangulation of $\Omega$ with the mesh size $h_0$. Let $\mathcal{U}_l$, $1\leq l\leq L$, be obtained from $\mathcal{U}_{l-1}$ via a ``regular" subdivision: edge midpoints in $\mathcal{U}_{l-1}$ are connected by new edges to form $\mathcal{U}_{l}$.

For any integer $L\geq 2$, let $\mathcal{F}_L$ be an interface-fitted mesh generated from $\mathcal{U}_L$. In order to derive an optimal multi-grid method for the interface problem, we construct a sequence of nested triangulations $\{\mathcal{F}_l\}_{l=0}^L$ as follows, see Figure \ref{seqT} for an illustration. The blue line is an interface, the red triangles, yellow squares and black points denotes degree of freedoms on different levels.
\begin{algorithm}[H]
\caption{Generate an interface adaptive mesh $\mathcal{F}_l~(0\leq l\leq L-1)$.}
\begin{algorithmic}[]\label{vcycle}
\STATE (1) Generate an interface-unfitted quasi-uniform mesh $\mathcal{U}_l$ with its mesh size equal $h_l$. Denote $\mathcal{U}_{l,0}=\mathcal{U}_l$;\\
\STATE (2) If $l<L$, refine $T\in \mathcal{U}_{l,j}$ via ``regular" subdivision if $T$ is an interface element or is a neighborhood of an interface element to get $\mathcal{U}_{l,j+1}$. Repeat step (2) to get $\mathcal{U}_{l,L-l}$;\\
\STATE (3) Connect intersected points of $\Gamma$ and $\mathcal{U}_{l,L-l}$ to generate an interface-fitted grid $\mathcal{F}_l$;\\
\end{algorithmic}
\end{algorithm}
\vspace{-1cm}
\begin{figure}[H]\centering
\includegraphics[width=0.6\textwidth]{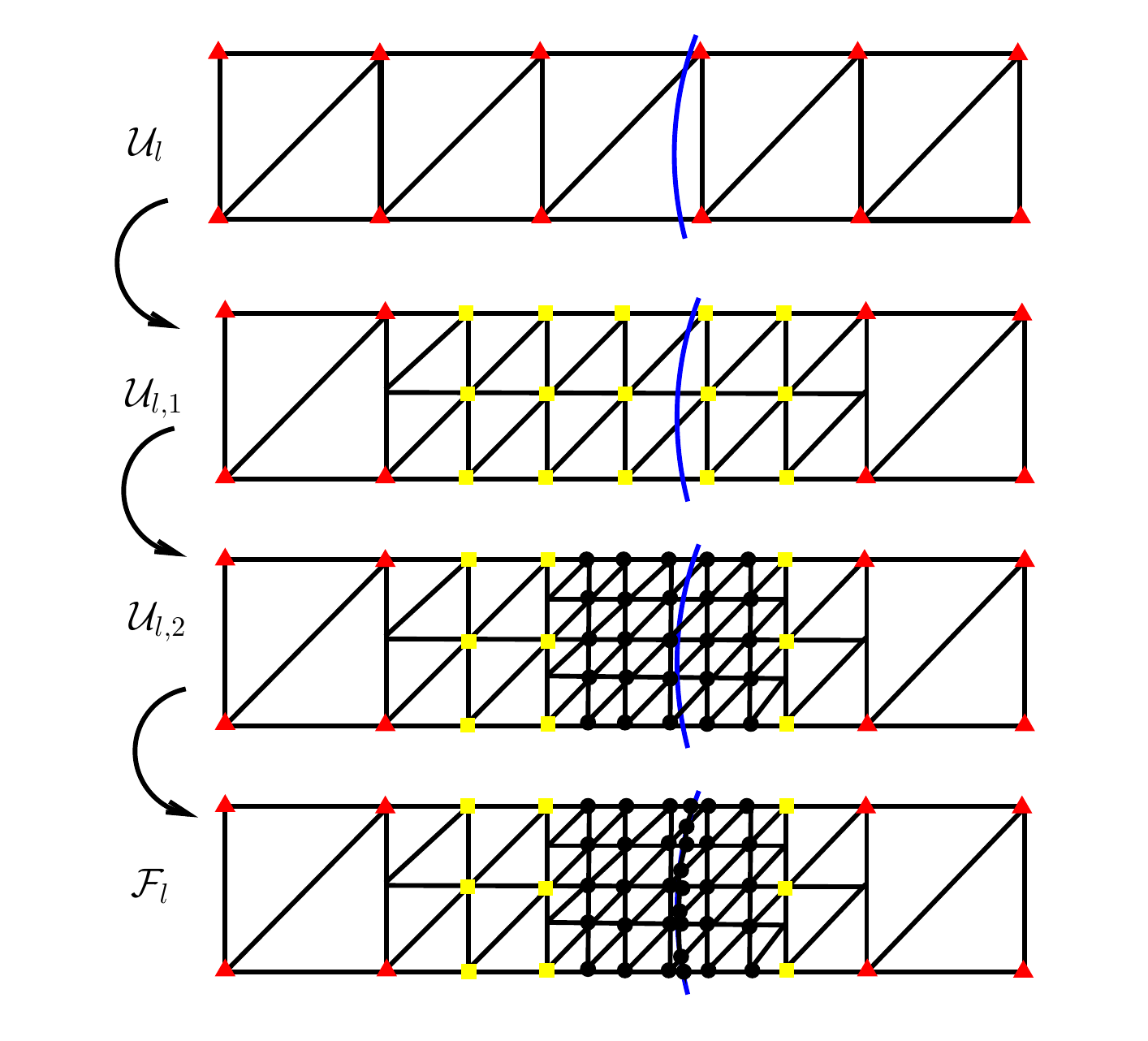}
\caption{An illustration for generating $\mathcal{F}_0$ with $L=2$.}
\label{seqT}
\end{figure}

Recall that $\mathcal{U}_{l}^I$ is the collection of interface elements in $\mathcal{U}_{l}$. Let $\mathcal{U}_{l,\Gamma}$ be an extension of $\mathcal{U}_{l}^I$ which also contains all neighbor elements of interface elements, i.e.,
\begin{equation*}
\mathcal{U}_{l,\Gamma} = \{T\in \mathcal{U}_{l}; \exists \tilde{T}\in \mathcal{U}_{l}^I, s.t.~~ \partial T\cap \partial \tilde{T}\neq \emptyset\}.
\end{equation*}
The corresponding region for the above element collection is denoted by $\Omega_{l,\Gamma}$ (The gray-painted area in Figure \ref{omega}). Define $\omega_{i,l} = \Omega_i\setminus \Omega_{l,\Gamma}$ for $i=1,2$. Consequently, the domain $\Omega$ admits the following division on the $l$-th level (see Figure \ref{omega})
\begin{equation}
\Omega = \bar{\Omega}_{l,\Gamma}\cup\omega_{l,1}\cup\omega_{l,2}.
\end{equation}
Let $W_l=span\{\phi_{l,i}\}_{i=1}^{n_l}$ be the $P_1$ conforming finite element space defined on $\mathcal{U}_l$, where $\{\phi_{l,i}\}_{i=1}^{n_l}$ is the nature nodal basis such that $\phi_{l,i}(x_{l,j})=\delta_{i,j}$ for each non-Dirichlet boundary node $x_{l,j}$. Decompose the space $W_l$ according to the division of the domain $\Omega$ on $(l-1)$-th level
\begin{equation*}
W_l = W_{l,\Gamma}\oplus W_{l,\omega},
\end{equation*}
where
\begin{align*}
W_{l,\Gamma}&=span\{\phi_{l,i}; x_{l,i}\in\Omega_{l-1,\Gamma}\},\\
W_{l,\omega}&=span\{\phi_{l,i}; x_{l,i}\in\Omega\setminus\Omega_{l-1,\Gamma}\}.
\end{align*}
Define $n_{l,\Gamma}=\text{dim}(W_{l,\Gamma})$ and $n_{l,\omega}=\text{dim}(W_{l,\omega})=n_l-n_{l,\Gamma}$. Without loss of generality, assume $W_{l,\omega}= span\{\phi_{l,i}\}_{i=1}^{n_{l,\omega}}$ and $W_{l,\Gamma}=span\{\phi_{l,i}\}_{i=n_{l,\omega}+1}^{n_l}$. Denote $\phi_{l,0}=\sum_{i=n_{l,\omega}+1}^{n_l}\phi_{l,i}$ and $\omega_{l,\Gamma}=\{(x,y)\in \Omega; \phi_{l,0}(x,y)\equiv 1\}$ (the green region in Figure \ref{omega}). In another word, $\omega_{l,\Gamma}$ is obtained by shrinking one element width inwards from $\Omega_{l-1,\Gamma}$.
\begin{figure}\centering
\includegraphics[width=0.6\textwidth]{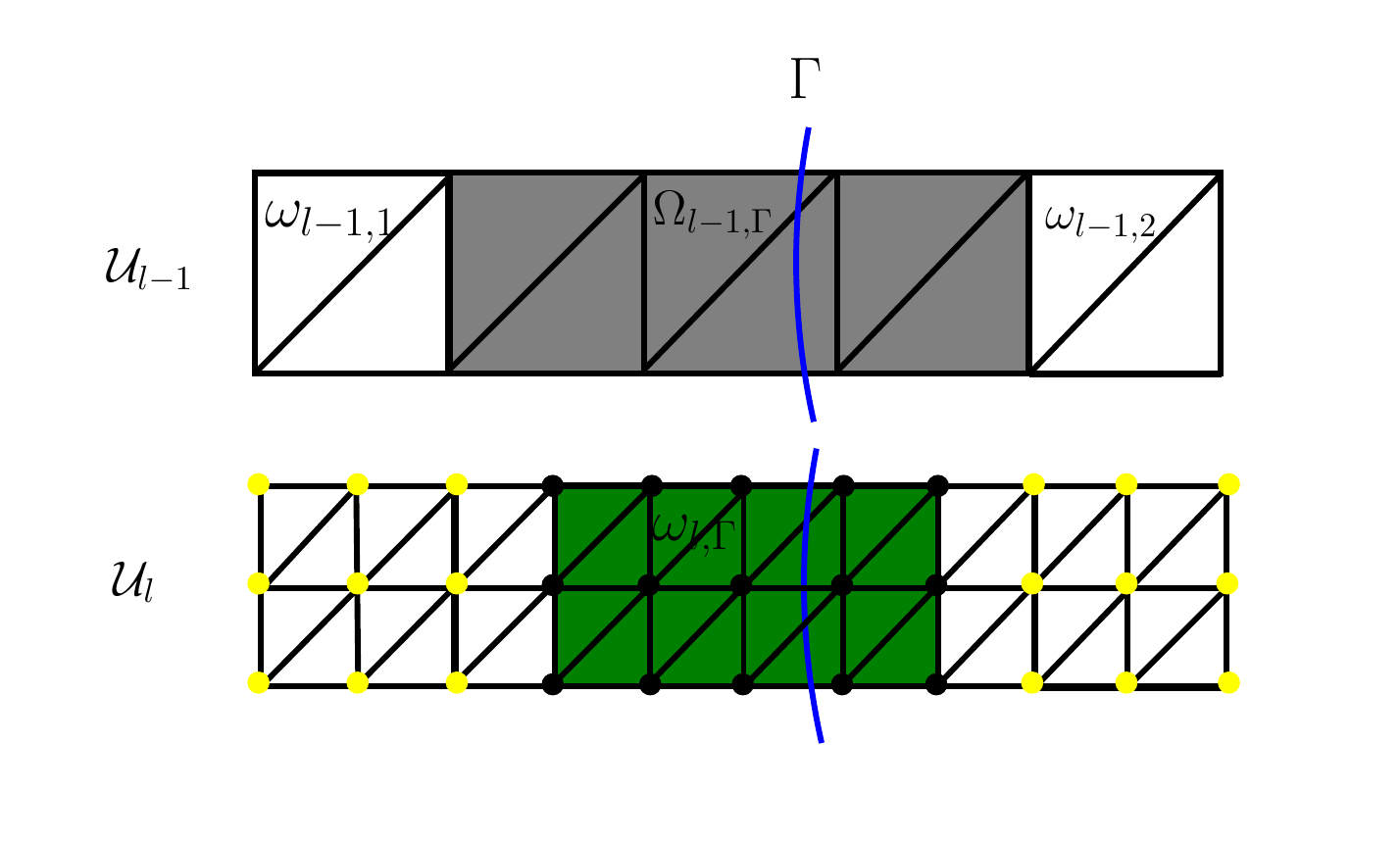}
\caption{An illustration for the space decomposition of $W_l$: black dots represent the degree of freedom of $W_{l,\Gamma}$; yellow dots represent the degree of freedom of $W_{l,\omega}$.}
\label{omega}
\end{figure}

Let $V_L$ be the local anisotropic finite element space defined on $\mathcal{F}_L$ as in \eqref{aspace}, and
\begin{equation*}
V_{L,0}=\{v_h\in V_L; supp(v_h)\subset\Omega_{L-1,\Gamma}\}.
\end{equation*}
Then the coarse space $V_l(1\leq l\leq L-1)$ associated with the interface adaptive mesh $\mathcal{F}_l$ is defined as follows
\begin{equation}
V_l=V_{l,0}\oplus V_{l,\omega},
\end{equation}
where
\begin{eqnarray*}
&&V_{l,0}=\sum_{j=l}^{L-1}W_{j,\Gamma}
+V_{L,0},\\
&&V_{l,\omega}=W_{l,\omega},
\end{eqnarray*}
and $V_0=\sum_{j=0}^{L-1}W_{j,\Gamma}+V_{L,0}$. Obviously, the inclusion relation holds
\begin{equation*}
V_0\subset V_1\subset\cdots\subset V_{L-1}\subset V_L.
\end{equation*}
For each space $V_{l,\omega}$, it can be further decomposed into micro pieces
\begin{equation*}
V_{l,\omega}=\sum_{j=1}^{n_{l,\omega}}V_{l,j}=\sum_{j=1}^{n_{l,\omega}}span\{\phi_{l,j}\}.
\end{equation*}
Therefore, the decomposition of $V_L$ can be written as follows
\begin{equation}\label{spdecomp}
V_L=\sum_{l=0}^L V_l=V_0+\sum_{l=1}^L (V_{l,0}+\sum_{j=1}^{n_{l,\omega}}V_{l,j}).
\end{equation}
\begin{remark}
The key idea of the grid coarsening is to keep elements near the interface on each level without any coarsening, and coarsen elements far from the interface in a standard way. Since the number of degrees of freedom in elements near the interface ($V_{l,0}$) is $O(h_l^{-1})$, we can solve the residual equation on $V_{l,0}$ with an exact solver.
\end{remark}
\subsection{The multigrid algorithm}
For convenience, let $a_L(\cdot,\cdot)$ be short for $a_{h_L}(\cdot,\cdot)$ and $\Omega_{i,L}$ be short for $\Omega_{i,h_L}$, $i=1,2$. On each level $0\leq l\leq L$, the operator $A_l:V_l\rightarrow V_l$ is defined by
\begin{equation*}
(A_lv,w) = a_L(v,w)\quad\forall v,w\in V_l.
\end{equation*}
Then the weak formulation \eqref{weak2} is equivalent to the following operator equation
\begin{equation}\label{Oeq}
A_Lu_L = F_L,
\end{equation}
where $F_L\in V_L$ such that $(F_L,v)=\bar{F}_{h_L}(v)$, $\forall v\in V_L$. For $v\in H^1(\Omega)$, define the following weighted $L^2$-norm and $H^1$-norm
\begin{align*}
\normmm{v}_{0} = (\|\beta_1^{1/2}v\|_{L^2(\Omega_{1,L})}^2 + \|\beta_2^{1/2}v\|_{L^2(\Omega_{2,L})}^2)^{1/2},\\
\normmm{v}_{1} = (|\beta_1^{1/2}v|_{H^1(\Omega_{1,L})}^2 + |\beta_2^{1/2}v|_{H^1(\Omega_{2,L})}^2)^{1/2}.
\end{align*}
And for any set $D\subset \Omega$, define
\begin{align*}
\normmm{v}_{0,D} = (\|\beta_1^{1/2}v\|_{L^2(\Omega_{1,L}\cap D)}^2 + \|\beta_2^{1/2}v\|_{L^2(\Omega_{2,L}\cap D)}^2)^{1/2},\\
\normmm{v}_{1,D} = (|\beta_1^{1/2}v|_{H^1(\Omega_{1,L}\cap D)}^2 + |\beta_2^{1/2}v|_{H^1(\Omega_{2,L}\cap D)}^2)^{1/2}.
\end{align*}

Let $Q_l:L^2(\Omega)\rightarrow V_l$ be the standard orthogonal $L^2$ projection defined by
\begin{equation*}
(Q_l u, w)=(u, w)~~\forall w\in V_l.
\end{equation*}
For $j=0,1,\cdots,n_{l,\omega}$, define $P_{l,j}:V_L\rightarrow V_{l,j}$ by
\begin{equation*}
a_L(P_{l,j}u,w)=a_L(u,w)\quad\forall w\in V_{l,j},
\end{equation*}
and $P_0:V_L\rightarrow V_{0}$ is defined by
\begin{equation*}
a_L(P_0u,w)=a_L(u,w)\quad\forall w\in V_{0}.
\end{equation*}
Denoting $W_l(\Omega\setminus\Omega_{l-1,\Gamma})=\{v|_{\Omega\setminus\Omega_{l-1,\Gamma}}; v\in W_l\}$, let $Q_{l,\beta}^{\omega}: L^2(\Omega\setminus\Omega_{l-1,\Gamma})\rightarrow W_l(\Omega\setminus\Omega_{l-1,\Gamma})$ be the $L^2$ projection defined as follows
\begin{equation*}
(\beta Q_{l,\beta}^{\omega} v, w)_{\Omega\setminus\Omega_{l-1,\Gamma}}=(\beta u, w)_{\Omega\setminus\Omega_{l-1,\Gamma}}~~\forall w\in W_l(\Omega\setminus\Omega_{l-1,\Gamma}).
\end{equation*}
Let $Q_{l,\beta}v\in W_l$ be such that
\begin{equation*}
Q_{l,\beta}v=\left\{
\begin{aligned}
&0,~~~~~~~\text{in}~ \omega_{l,\Gamma},\\
&Q_{l,\beta}^{\omega}v,~~\text{in}~ \Omega\setminus\Omega_{l-1,\Gamma}.
\end{aligned}
\right.
\end{equation*}
Since the region $\Omega_{l-1,\Gamma}\setminus\omega_{l,\Gamma}$ is a band of one triangular element width (see Figure \ref{deofQ}), $Q_{l,\beta}v$ is a well-defined and unique function in $W_l$. By simultaneous approximation property, it holds
\begin{equation*}
\normmm{v-Q_{l,\beta}v}_{0,\Omega\setminus\Omega_{l-1,\Gamma}}\lesssim h_l\normmm{v}_{1,\Omega\setminus\Omega_{l-1,\Gamma}}.
\end{equation*}
\begin{figure}[H]
\vspace{-0.5cm}
\centering
  \includegraphics[width=0.6\textwidth]{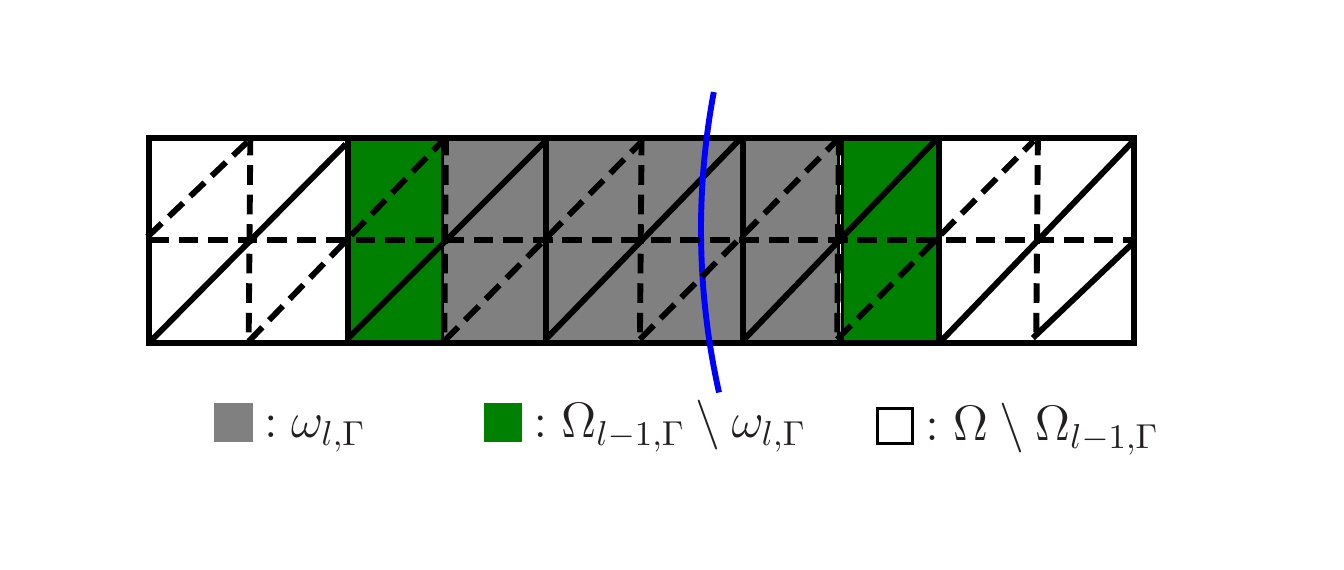}
  \vspace{-1cm}
  \caption{An illustration for the definition of $Q_{l,\beta}$.}\label{deofQ}
\end{figure}
The block Gauss-Seidel smoother $R_l:V_l\rightarrow V_l$ is defined by
\begin{equation*}
R_l=(I-\prod_{i=0}^{n_{l,\omega}}(I-P_{l,i}))A_l^{-1}.
\end{equation*}
\begin{remark}
This block Gauss-Seidel smoother $R_l$ means that
\begin{enumerate}
      \item do subspace correction on $V_{l,0}$ with an exact solver.
      \item do subspace correction on $V_{l,\omega}$ with the point Gauss-Seidle smoother.
\end{enumerate}
\end{remark}
The operator $B_L:V_L\rightarrow V_L$ is defined recursively as follows
\begin{algorithm}[H]
\caption{(V-cycle)Let $B_0=A_0^{-1}$, for $F_L\in V_L$, define $B_LF_L=u^{(3)}$.}
\begin{algorithmic}[]\label{vcycle}
\STATE (1)Presmoothing: $u^{(1)}=R_L F_L$;\\
\STATE (2)Correction: $u^{(2)}=u^{(1)}+B_{L-1}Q_{L-1}(F_L-A_Lu^{(1)})$;\\
\STATE (3)Postsmoothing:$u^{(3)}=u^{(2)}+R_L^\ast(F_L-A_L u^{(2)})$;\\
\end{algorithmic}
\end{algorithm}
It is straightforward to show that (see \cite{Xu2002} for details)
\begin{equation}
I-B_LA_L=\left((I-P_0)\prod_{l=1}^L\prod_{i=0}^{n_{l,\omega}}(I-P_{l,i})\right)^\ast
\left((I-P_0)\prod_{l=1}^L\prod_{i=0}^{n_{l,\omega}}(I-P_{l,i})\right).
\end{equation}
\subsection{V-cycle convergence for the $L^{th}$ level iteration}
Let $\pi_l: C_0(\Omega)\rightarrow V_l$ be the nodal interpolation operator. Define $\Pi_l:V_L\rightarrow V_l$ by
\begin{equation}
\Pi_lv=Q_{l,\beta}v+\pi_{l}((\Pi_{l+1}v-Q_{l,\beta}v)\phi_{l,0})\quad \text{for}~ l=L-1,L-2,\cdots,0,
\end{equation}
where $\Pi_L=I$. By the definition of $\phi_{l,0}$, it holds
\begin{eqnarray}\label{beta}
\Pi_lv=\left\{
\begin{aligned}
&\Pi_{l+1}v,~~\text{in}~ \omega_{l,\Gamma},\\
&Q_{l,\beta}v,~~\text{in}~ \Omega\setminus\Omega_{l-1,\Gamma}.
\end{aligned}
\right.
\end{eqnarray}
\begin{figure}[H]
\vspace{-0.5cm}
\centering
  \includegraphics[width=0.4\textwidth]{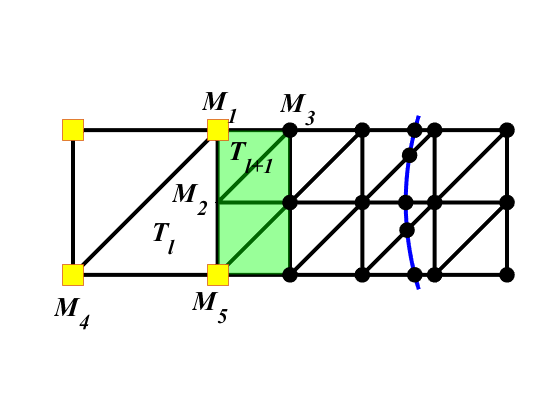}
  \vspace{-1cm}
  \caption{The green-painted area: $\Omega_{l-1,\Gamma}\setminus\omega_{l,\Gamma}$.}\label{patch}
\end{figure}
\begin{lemma}\label{pie1}
For $0\leq l\leq L-1$, it holds that
\begin{equation}\label{pierr1}
\normmm{(\Pi_{l+1}-\Pi_l) v}_{0}+h_l\normmm{(\Pi_{l+1}-\Pi_l) v}_{1}\lesssim h_l\normmm{v}_{1}\quad \forall v\in V_L.
\end{equation}
\end{lemma}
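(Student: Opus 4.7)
My plan is to exploit the piecewise definition of $\Pi_l$ in \eqref{beta} and partition $\Omega$ into three regions determined by $\phi_{l,0}$: the interior $\omega_{l,\Gamma}$ where $\phi_{l,0}\equiv 1$, the exterior $\Omega\setminus\Omega_{l-1,\Gamma}$ where $\phi_{l,0}\equiv 0$, and the one-element-wide transition band $T_l:=\Omega_{l-1,\Gamma}\setminus\omega_{l,\Gamma}$ where $\phi_{l,0}$ varies linearly between $1$ and $0$. Setting $w:=(\Pi_{l+1}-\Pi_l)v$ and $g:=\Pi_{l+1}v-Q_{l,\beta}v$, I would rewrite
\begin{equation*}
w\big|_{\omega_{l,\Gamma}}=0,\qquad w\big|_{\Omega\setminus\Omega_{l-1,\Gamma}}=Q_{l+1,\beta}v-Q_{l,\beta}v,\qquad w\big|_{T_l}=g-\pi_l(g\phi_{l,0}),
\end{equation*}
using that on $\Omega\setminus\Omega_{l-1,\Gamma}\subset\Omega\setminus\Omega_{l,\Gamma}$ both $\Pi_{l+1}$ and $\Pi_l$ collapse to their respective weighted $L^2$-projections. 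The contribution from $\omega_{l,\Gamma}$ is trivially zero.

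For the exterior region I would observe that it lies strictly away from $\Gamma$, so $\beta$ is constant there and $Q_{l,\beta},Q_{l+1,\beta}$ reduce to unweighted $L^2$-projections on nested quasi-uniform meshes. Standard simultaneous approximation estimates for $L^2$-projections in $L^2$ and $H^1$ (together with the uniform $H^1$-stability of $Q_{l,\beta}$ on shape-regular meshes) will give $\normmm{w}_{0,\Omega\setminus\Omega_{l-1,\Gamma}}\lesssim h_l\normmm{v}_{1,\Omega\setminus\Omega_{l-1,\Gamma}}$ and $\normmm{w}_{1,\Omega\setminus\Omega_{l-1,\Gamma}}\lesssim\normmm{v}_{1,\Omega\setminus\Omega_{l-1,\Gamma}}$, which is exactly the form demanded by \eqref{pierr1}.

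The main obstacle is the transition band $T_l$, where one must estimate $g-\pi_l(g\phi_{l,0})$ even though the product $g\phi_{l,0}$ is not a member of $V_l$. The crucial geometric fact is that $T_l$ sits at least one coarse element away from $\Gamma$, so the local mesh is shape-regular (no anisotropy) and $\beta$ is constant; hence the standard nodal interpolation estimates apply. On each $K\subset T_l$ I would split
\begin{equation*}
g-\pi_l(g\phi_{l,0})=g(1-\phi_{l,0})+\bigl(g\phi_{l,0}-\pi_l(g\phi_{l,0})\bigr),
\end{equation*}
bound the first summand directly in $L^2$ using $|1-\phi_{l,0}|\le 1$, and for the second summand use the Bramble--Hilbert interpolation error on a single shape-regular element together with the Leibniz rule $|g\phi_{l,0}|_{H^2(K)}\lesssim|g|_{H^1(K)}|\phi_{l,0}|_{H^1(K)}+\|g\|_{L^\infty(K)}|\phi_{l,0}|_{H^2(K)}$; since $\phi_{l,0}$ is piecewise linear with $\|\nabla\phi_{l,0}\|_{L^\infty}\lesssim h_l^{-1}$, and since $g\in V_{l+1}$ permits inverse estimates, every derivative of $\phi_{l,0}$ absorbed costs a factor $h_l^{-1}$ that is recovered by the $h_l^2$ from interpolation. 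It then remains to dominate $\|g\|_{L^2(T_l)}$ and $|g|_{H^1(T_l)}$ by $\normmm{v}_1$: write $g=(\Pi_{l+1}v-v)+(v-Q_{l,\beta}v)$, apply the already-established approximation property of $Q_{l,\beta}$, and invoke an inductive hypothesis (moving $l\to l+1$, with $\Pi_L=I$ as base case) to control $\Pi_{l+1}v-v$ via a telescoping sum $\sum_{j\ge l+1}(\Pi_{j+1}-\Pi_j)v$. The entire estimate on $T_l$ then combines to give $\normmm{w}_{0,T_l}+h_l\normmm{w}_{1,T_l}\lesssim h_l\normmm{v}_{1,\Omega_{l-1,\Gamma}}$. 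Summing the contributions from the three regions yields \eqref{pierr1}.
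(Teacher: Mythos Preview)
Your three--region decomposition and the treatment of $\omega_{l,\Gamma}$ and $\Omega\setminus\Omega_{l-1,\Gamma}$ are correct and match the paper. The trouble is on the transition band $T_l=\Omega_{l-1,\Gamma}\setminus\omega_{l,\Gamma}$.

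First, the Bramble--Hilbert step as you describe it does not go through: $g=\Pi_{l+1}v-Q_{l,\beta}v$ is only piecewise linear on the $\mathcal{U}_{l+1}$--children of $K\in\mathcal{U}_l$, so $g\phi_{l,0}\notin H^2(K)$ and the quantity $|g\phi_{l,0}|_{H^2(K)}$ in your Leibniz expansion is not defined (and $|\phi_{l,0}|_{H^2(K)}=0$ anyway since $\phi_{l,0}|_K$ is affine). One can repair this via a superapproximation argument based on nodal--value scaling rather than $H^2$ regularity, but that is not what you wrote.

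Second, and more importantly, your inductive route to bound $|g|_{H^1(T_l)}$ via the telescope $\sum_{j\ge l+1}(\Pi_{j+1}-\Pi_j)v$ would, as stated, accumulate an $(L-l)\sim|\log h_L|$ factor in the $H^1$ part, contradicting the lemma. The paper avoids this entirely by a geometric observation you are missing: since $T_l\subset\Omega\setminus\Omega_{l,\Gamma}$, formula \eqref{beta} at level $l+1$ gives $\Pi_{l+1}v|_{T_l}=Q_{l+1,\beta}v$. Hence on $T_l$ one has directly $g=Q_{l+1,\beta}v-Q_{l,\beta}v$, and the approximation properties of the weighted $L^2$--projections bound this by $h_l\normmm{v}_1$ with no induction. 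The paper then finishes with an elementary nodal--value/trace computation on each shape--regular $T_{l+1}\subset T_l$ to control $\normmm{(\Pi_{l+1}-\Pi_l)v}_{s,T_{l+1}}$ by $\normmm{Q_{l+1,\beta}v-Q_{l,\beta}v}_{s,T_l}$, $s=0,1$. Once you use $\Pi_{l+1}v=Q_{l+1,\beta}v$ on $T_l$, both of your difficulties disappear.
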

\begin{proof}
By the definition of $\Pi_l$, it has
\begin{eqnarray*}
(\Pi_{l+1}-\Pi_l) v=\left\{
\begin{aligned}
&0,~&\text{in}~&\omega_{l,\Gamma},\\
&(Q_{l+1,\beta}-Q_{l,\beta})v,~&\text{in}~& \Omega\setminus\Omega_{l-1,\Gamma}.
\end{aligned}
\right.
\end{eqnarray*}
Since
\begin{eqnarray*}
\begin{aligned}
&\normmm{(Q_{l+1,\beta}-Q_{l,\beta})v}_{0,\Omega\setminus\Omega_{l-1,\Gamma}}^2
+h_l^2\normmm{(Q_{l+1,\beta}-Q_{l,\beta})v}_{1,\Omega\setminus\Omega_{l-1,\Gamma}}^2\\
&\leq \sum\limits_{j=l}^{l+1}(\normmm{v-Q_{j,\beta}v}_{0,\Omega\setminus\Omega_{l-1,\Gamma}}^2
+h_l^2\normmm{v-Q_{j,\beta}v}_{1,\Omega\setminus\Omega_{l-1,\Gamma}}^2)\\
&\lesssim h_l^2 \normmm{v}_{1}^2,
\end{aligned}
\end{eqnarray*}
it only needs to estimate $\normmm{(Q_{l+1,\beta}-Q_{l,\beta})v}_{0,\Omega_{l-1,\Gamma}\setminus \omega_{l,\Gamma}}$.
Without loss of generality, assume $T_{l+1}\subset \Omega_{l-1,\Gamma}\setminus \omega_{l,\Gamma}$ as in Figure \ref{patch}. It is obvious that $T_{l+1}\subset \Omega\setminus \Omega_{l,\Gamma}$, therefore $\Pi_{l+1} v|_{T_{l+1}}= Q_{l+1,\beta}v$. A direct calculation shows that
\begin{eqnarray*}
\begin{aligned}
\normmm{(\Pi_{l+1}-\Pi_l)v}_{0,T_{l+1}}^2&\lesssim h_{l+1}^2\sum_{1\leq i\leq 2}\beta(M_1)(Q_{l+1,\beta}v(M_i)-Q_{l,\beta}v(M_i))^2\\
&\lesssim h_{l+1}\beta(M_1)\|Q_{l+1,\beta}v-Q_{l,\beta}v\|_{0,M_1 M_2}^2\\
&\lesssim h_{l+1}\beta(M_1)\|Q_{l+1,\beta}v-Q_{l,\beta}v\|_{0,M_1 M_5}^2\\
&\lesssim \normmm{Q_{l+1,\beta}v-Q_{l,\beta}v}_{0,T_{l}}^2
+h_{l+1}^2\normmm{Q_{l+1,\beta}v-Q_{l,\beta}v}_{0,T_{l}}^2\\
&\lesssim h_{l}^2\normmm{v}_{1,T_l}^2.
\end{aligned}
\end{eqnarray*}
Similarly, it is easy to derive that $\normmm{(\Pi_{l+1}-\Pi_l)v}_{1,T_{l+1}}^2 \lesssim \normmm{v}_{1,T_l}^2$.
Summing $T_{l+1}$ over $\Omega_{l,\Gamma}\setminus \omega_{l,\Gamma}$, it yields
\begin{equation*}
\normmm{(\Pi_{l+1}-\Pi_l)v}_{0,\Omega_{l-1,\Gamma}\setminus \omega_{l,\Gamma}}^2+h_l^2\normmm{(\Pi_{l+1}-\Pi_l)v}_{1,\Omega_{l-1,\Gamma}\setminus \omega_{l,\Gamma}}^2
\lesssim h_{l}^2\normmm{v}_{1}^2.
\end{equation*}
This completes the proof.
\end{proof}

\begin{lemma}\label{pie2}
For $0\leq l\leq L$, it holds
\begin{equation}
\normmm{v-\Pi_l v}_0+h_l\normmm{v-\Pi_l v}_1\leq Ch_l\normmm{v}_1\quad \forall v\in V_L.
\end{equation}
\end{lemma}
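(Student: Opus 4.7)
The natural strategy is the telescoping identity
\[
v - \Pi_l v = \sum_{k=l}^{L-1}(\Pi_{k+1}-\Pi_k)v,
\]
which is valid because $\Pi_L = I$. The plan is to apply Lemma \ref{pie1} to each summand and carry out the appropriate summation in $k$, handling the $L^2$ and $H^1$ parts separately.

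For the $L^2$ part the argument is routine: by the triangle inequality and the first half of Lemma \ref{pie1},
\[
\normmm{v-\Pi_l v}_0 \;\le\; \sum_{k=l}^{L-1}\normmm{(\Pi_{k+1}-\Pi_k)v}_0 \;\lesssim\; \sum_{k=l}^{L-1} h_k\,\normmm{v}_1 .
\]
Since the hierarchy $\{\mathcal{F}_k\}$ is produced by regular subdivision, $h_{k+1}\le h_k/2$, and the geometric series is bounded by $2h_l$, which gives the desired $L^2$ estimate.

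For the $H^1$ part, the same triangle inequality together with the second half of Lemma \ref{pie1} only delivers $\normmm{(\Pi_{k+1}-\Pi_k)v}_1 \lesssim \normmm{v}_1$, which sums to $(L-l)\normmm{v}_1$ and is therefore not uniform in $L$. My plan is to sharpen this by returning to the internal structure of Lemma \ref{pie1}. Recall that $(\Pi_{k+1}-\Pi_k)v$ vanishes on $\omega_{k,\Gamma}$ and coincides with $(Q_{k+1,\beta}-Q_{k,\beta})v$ on $\Omega\setminus\Omega_{k-1,\Gamma}$, so the only level-$k$ contribution that is not controlled by standard $L^2$-projection approximation lives on the one-element-wide buffer strip $\Omega_{k-1,\Gamma}\setminus\omega_{k,\Gamma}$, and on each such element the proof of Lemma \ref{pie1} produced the local bound $\normmm{(\Pi_{k+1}-\Pi_k)v}_{1,T_{k+1}}^2 \lesssim \normmm{v}_{1,T_k}^2$. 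Because the buffer strips are nested and shrink geometrically toward $\Gamma$, any fixed element of $\mathcal{F}_L$ lies in only $O(1)$ of them, which gives a finite-overlap covering. Combining this with a strengthened Cauchy--Schwarz across levels (weighted by the geometrically decaying $h_k$) upgrades the naive $\ell^1$-in-$k$ bound to
\[
\normmm{v-\Pi_l v}_1^2 \;\lesssim\; \sum_{k=l}^{L-1}\normmm{(\Pi_{k+1}-\Pi_k)v}_1^2 \;\lesssim\; \normmm{v}_1^2 .
\]

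The main obstacle is precisely this $L$-uniform $H^1$ bound; the $L^2$ half is immediate. Its resolution hinges on exploiting two features already present in the construction: the nesting of the buffer strips $\Omega_{k-1,\Gamma}\setminus\omega_{k,\Gamma}$ and the geometric refinement $h_{k+1}=h_k/2$. Once the $H^1$ estimate is established in this sharper form, combining it with the $L^2$ estimate above yields the stated inequality, and the case $l = L$ is trivial since $\Pi_L=I$.
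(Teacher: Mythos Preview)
Your telescoping strategy and the $L^2$ argument coincide with the paper's proof: triangle inequality plus Lemma~\ref{pie1} and a geometric sum in $h_k$. You are also right that the $H^1$ part is the delicate one; applying the triangle inequality and Lemma~\ref{pie1} term by term yields only $\normmm{v-\Pi_l v}_1\lesssim (L-l)\,\normmm{v}_1$. The paper's displayed chain replaces the outer factor $h_l$ by $h_r$ inside the sum, which is not a consequence of the triangle inequality (since $h_r\le h_l$), so you have in fact spotted a step the paper does not justify.

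Your proposed remedy is headed in the right direction but two points need real work before it closes the gap. First, the inequality $\normmm{v-\Pi_l v}_1^2\lesssim\sum_{k}\normmm{(\Pi_{k+1}-\Pi_k)v}_1^2$ is exactly a strengthened Cauchy--Schwarz statement, and you must verify it for these particular increments: the bulk part $(Q_{k+1,\beta}-Q_{k,\beta})v$ lies in $W_{k+1}$, but the buffer-strip part is interpolated from finer levels, so the usual ``$w_k\in W_k$'' hypothesis of the standard SCS lemma is not literally satisfied. Second, the bound $\sum_k\normmm{(\Pi_{k+1}-\Pi_k)v}_1^2\lesssim\normmm{v}_1^2$ does not follow from Lemma~\ref{pie1} alone (that lemma gives each summand $\lesssim\normmm{v}_1^2$, hence an $(L-l)$ factor again). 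Your finite-overlap observation handles the buffer-strip contributions, but for the bulk terms you need a stable-decomposition estimate of BPX type; note that $Q_{k,\beta}$ is a \emph{local} weighted $L^2$ projection on $\Omega\setminus\Omega_{k-1,\Gamma}$ (extended by zero), not the global projection onto $W_k$, so the standard BPX argument must be adapted. A cleaner alternative that avoids the level-sum altogether is to argue region by region: on $\Omega\setminus\Omega_{l-1,\Gamma}$ one has $\Pi_l v=Q_{l,\beta}v$ and can appeal directly to the $H^1$-stability of the (weighted) $L^2$ projection on the quasi-uniform mesh $\mathcal{U}_l$; on $\omega_{l,\Gamma}$ one unrolls $\Pi_l v=\Pi_{l+1}v=\cdots$ and uses that $v-\Pi_l v$ is supported away from the innermost strip; the transition layer is then handled by your finite-overlap argument.
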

\begin{proof}
By Lemma \ref{pie1} and the triangle inequality, it's easy to derive that
\begin{eqnarray*}
\begin{aligned}
&\normmm{v-\Pi_l v}_0+h_l\normmm{v-\Pi_l v}_1\\
&\leq \sum\limits_{r=l}^{L-1}(\normmm{\pi_{r+1}v-\Pi_r v}_0
+h_r\normmm{\Pi_{r+1}v-\Pi_r v}_1)\\
&\lesssim h_l \normmm{v}_1.
\end{aligned}
\end{eqnarray*}
\end{proof}
According to the space decomposition \eqref{spdecomp}, for any $v\in V_L$, do the following decomposition
\begin{eqnarray}\label{fundecomp}
v=v_0+\sum_{l=1}^L(v_{l,0}+\sum\limits_{i=1}^{n_{l,\omega}}v_{l,i}),
\end{eqnarray}
where $v_0=\Pi_0 v\in V_0$, and $v_{l,i}=\pi_l((\Pi_{l}v-\Pi_{l-1}v)\phi_{l,i})\in V_{l,i}$ for $i=0,1,\cdots,n_{l,\omega}$.
\begin{lemma}
The V-cycle algorithm \eqref{vcycle} has the following convergence rate estimate
\begin{equation}
\|I-B_LA_L\|_{A_L}\leq 1-\frac{1}{1+C|\log h_L|}.
\end{equation}
\end{lemma}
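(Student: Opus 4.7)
My approach is to apply the Xu--Zikatanov abstract multilevel identity \cite{Xu2002} to the successive subspace correction scheme in Algorithm \ref{vcycle}, which reduces the target contraction estimate to bounding the stable decomposition constant together with a strengthened Cauchy--Schwarz constant $K_2$. For the nested $P_1/Q_1$ family $V_0 \subset V_1 \subset \cdots \subset V_L$ with locally supported nodal basis, $K_2 = O(1)$ via a standard finite-overlap argument, so the whole task is to show that the concrete decomposition \eqref{fundecomp} satisfies
\begin{equation*}
\normmm{v_0}_1^2 + \sum_{l=1}^L \Bigl(\normmm{v_{l,0}}_1^2 + \sum_{i=1}^{n_{l,\omega}} \normmm{v_{l,i}}_1^2\Bigr) \lesssim (1+|\log h_L|)\,\normmm{v}_1^2
\end{equation*}
for every $v \in V_L$; the XZ identity then yields the claimed contraction factor $1-1/(1+C|\log h_L|)$.

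First I would verify that \eqref{fundecomp} reproduces $v$: this is a telescoping identity in $\Pi_l v - \Pi_{l-1}v$ combined with the partition-of-unity $\phi_{l,0}+\sum_{i=1}^{n_{l,\omega}}\phi_{l,i}=1$ on the support of $\Pi_l v - \Pi_{l-1}v$, together with $\Pi_L = I$. Next I would estimate each subspace component separately. The coarsest piece $v_0 = \Pi_0 v$ satisfies $\normmm{v_0}_1^2 \lesssim \normmm{v}_1^2$ directly from Lemma \ref{pie2}. The near-interface piece $v_{l,0} \in V_{l,0}$, which is handled by the exact solver inside the block smoother $R_l$, is controlled by
\begin{equation*}
\normmm{v_{l,0}}_1^2 \lesssim \normmm{(\Pi_l-\Pi_{l-1})v}_1^2 \lesssim \normmm{v}_1^2
\end{equation*}
via Lemma \ref{pie1}. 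Each Gauss--Seidel mode $v_{l,i}$, $1\le i\le n_{l,\omega}$, is supported inside $\Omega\setminus\Omega_{l-1,\Gamma}$ where $\beta$ is locally constant and $\mathcal{U}_l$ is shape-regular; a standard inverse estimate together with the finite overlap of nodal supports yields
\begin{equation*}
\sum_{i=1}^{n_{l,\omega}} \normmm{v_{l,i}}_1^2 \lesssim h_l^{-2}\normmm{(\Pi_l-\Pi_{l-1})v}_{0,\Omega\setminus\Omega_{l-1,\Gamma}}^2 \lesssim \normmm{v}_1^2,
\end{equation*}
again by Lemma \ref{pie1}. Summing the $L+1$ level contributions and invoking $h_L \simeq h_0\cdot 2^{-L}$ gives $K_1 \lesssim L \lesssim 1+|\log h_L|$, which is exactly the required bound.

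The main obstacle is ensuring that every constant hidden in $\lesssim$ is robust with respect to the coefficient jump $\beta_1/\beta_2$. This is precisely what the $\beta$-weighted norms $\normmm{\cdot}_0, \normmm{\cdot}_1$ and the $\beta$-weighted projection $Q_{l,\beta}$ were designed for: the key transition estimate of Lemma \ref{pie1} on the one-element-wide strip $\Omega_{l-1,\Gamma}\setminus\omega_{l,\Gamma}$ was established with a constant that does not involve $\beta_1/\beta_2$, while the pointwise Gauss--Seidel step is deliberately confined to $\Omega\setminus\Omega_{l-1,\Gamma}$, where the inverse estimate in the $v_{l,i}$ bound is $\beta$-blind inside each subdomain. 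The delicate verification is the interaction between the two regimes, i.e., that $Q_{l,\beta}$ on the regular region and the exact solver on the interface layer together control $\normmm{\Pi_l v - \Pi_{l-1}v}_1$ without leaking the jump ratio; once this is settled, assembling the stable decomposition, the SCS inequality, and the XZ identity is standard multigrid bookkeeping.
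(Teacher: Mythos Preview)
Your ingredients --- Lemmas~\ref{pie1} and~\ref{pie2}, the inverse estimate on the regular region, and the $\beta$-weighted norms --- match the paper's, and your stable-decomposition bound $K_1\lesssim|\log h_L|$ is correct. The organization, however, differs in a way that matters. The paper does \emph{not} pass through the pair $(K_1,K_2)$; it invokes the Xu--Zikatanov identity $\|I-B_LA_L\|_{A_L}\le 1-1/(1+c_0)$ and bounds $c_0$ directly by inserting the decomposition \eqref{fundecomp} and exploiting the telescoping
\[
\sum_{(k,j)>(l,i)}v_{k,j}=(v-\Pi_l v)+\sum_{j=i+1}^{n_{l,\omega}}v_{l,j}.
\]
Each projected tail $\normmm{P_{l,i}\sum_{(k,j)>(l,i)}v_{k,j}}_1^2$ is restricted to $\Delta_{l,i}=\mathrm{supp}(\phi_{l,i})$ and split into an inter-level piece $(v-\Pi_lv)$ controlled by Lemma~\ref{pie2} and a same-level piece controlled by Lemma~\ref{pie1} plus finite overlap. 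Summing over the $L$ levels yields $c_0\lesssim|\log h_L|$ with no global strengthened Cauchy--Schwarz constant ever appearing.

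Your claim $K_2=O(1)$ is the genuine gap. The interface-block subspaces are nested: by definition $V_{l,0}=\sum_{j=l}^{L-1}W_{j,\Gamma}+V_{L,0}$, hence $V_{L,0}\subset V_{L-1,0}\subset\cdots\subset V_{1,0}\subset V_0$. The abstract SCS constant between $V_{l,0}$ and $V_{k,0}$ is therefore exactly $1$ for every pair $l\neq k$, with no decay in $|l-k|$; the corresponding $L\times L$ submatrix of the interaction matrix is all-ones and has spectral radius $L$, so $K_2\gtrsim L$ and the generic route $c_0\lesssim K_1K_2^2$ degrades to $|\log h_L|^3$. The paper's direct tail computation sidesteps this because after the telescoping the entire cross-level interaction is absorbed into the single term $v-\Pi_lv$, already bounded in energy by $\normmm{v}_1$ via Lemma~\ref{pie2}; one never has to compare $v_{l,0}$ against $v_{k,0}$ abstractly. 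To repair your argument, drop the $(K_1,K_2)$ intermediary and estimate $c_0$ through the tail identity; all of your level-wise bounds and your remarks on $\beta$-robustness then carry over unchanged.
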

The proof included below mainly follows \cite{Xu2008}, specific details are still given for completeness.
\begin{proof}
By X-Z identity (see \cite{Xu2002}), it holds
\begin{equation*}
\|I-B_LA_L\|_{A_L}\leq 1-\frac{1}{1+c_0},
\end{equation*}
with
\begin{equation*}
c_0=\sup_{\normmm{v}_{1}=1}\inf_{v=v_0+\sum_{l=1}^L\sum\limits_{i=0}^{n_{l,\omega}}v_{l,i}}c(v),
\end{equation*}
where
\begin{equation*}
c(v)=\normmm{P_0(v-v_0)}_{1}^2
+\sum\limits_{l=1}^L\sum_{i=0}^{n_{l,\omega}}\normmm{P_{l,i}
\sum_{(k,j)>(l,i)}v_{k,j}}_{1}^2.
\end{equation*}
Let $\Delta_{l,i}=supp(\phi_{l,i})$ and notice
\begin{equation*}
\sum_{(k,j)>(l,i)}v_{k,j}=(v-\Pi_l v)+\sum\limits_{j=i+1}^{n_{l,\omega}}v_{l,j},
\end{equation*}
\begin{eqnarray*}
\begin{aligned}
\sum\limits_{l=1}^L\sum_{i=0}^{n_{l,\omega}}\normmm{P_{l,i}\sum_{(k,j)>(l,i)}v_{k,j}}_{1}^2
&\lesssim \sum\limits_{l=1}^L\sum_{i=0}^{n_{l,\omega}}(\normmm{P_{l,i}(v-\Pi_lv)}_{1}^2
+\normmm{P_{l,i}\sum_{j=i+1}^{n_{l,\omega}}v_{l,j}}_{1}^2)\\
&\lesssim \sum\limits_{l=1}^L\sum_{i=0}^{n_{l,\omega}}(\normmm{v-\Pi_lv}_{1,\Delta_{l,i}}^2
+\sum_{j=i+1}^{n_{l,\omega}}\normmm{v_{l,j}}_{1,\Delta_{l,i}}^2).
\end{aligned}
\end{eqnarray*}
Observe that
\begin{equation*}
\sum\limits_{l=1}^L\sum_{i=0}^{n_{l,\omega}}\normmm{v-\Pi_lv}_{1,\Delta_{l,i}}^2
=\sum\limits_{l=1}^L\normmm{v-\Pi_lv}_{1,\Omega}^2\lesssim |\log h_L|\normmm{v}_{1,\Omega}^2,
\end{equation*}
and
\begin{eqnarray*}
\begin{aligned}
\sum\limits_{l=1}^L\sum_{i=0}^{n_{l,\omega}}\sum_{j=i+1}^{n_{l,\omega}}\normmm{v_{l,j}}_{1,\Delta_{l,i}}^2
&\lesssim\sum\limits_{l=1}^L(\normmm{v_{l,0}}_{1,\Delta_{l,0}}^2+\sum_{i=1}^{n_{l,\omega}}
\sum_{j=i+1}^{n_{l,\omega}}h_l^{-2}\normmm{v_{l,j}}_{0,\Delta_{l,i}}^2)\\
&\lesssim \sum\limits_{l=1}^L(\normmm{\pi_l((\Pi_lv-\Pi_{l-1}v)\phi_{l,0})}_{1,\Delta_{l,0}}^2
+\sum_{i=1}^{n_{l,\omega}}\sum_{j=i+1}^{n_{l,\omega}}h_l^{-2}\normmm{v_{l,j}}_{0,\Delta_{l,i}}^2)\\
&\lesssim \sum\limits_{l=1}^L(\normmm{\Pi_lv-\Pi_{l-1}v}_{1,\Delta_{l,0}}^2
+\sum_{i=0}^{n_{l,\omega}}h_l^{-2}\normmm{\Pi_lv-\Pi_{l-1}v}_{0,\Delta_{l,i}}^2)\\
&\lesssim |\log h_L|\normmm{v}_{1}^2.
\end{aligned}
\end{eqnarray*}
Combining the above inequalities concludes the proof.
\end{proof}

\section{Numerical Examples}
\label{sec:example}
In this section, we present several numerical examples to show the performance of our methods. Particular attention will be paid on verifying its high order convergence and examining its robustness in dealing with low regularity solutions and complex geometries. The computational domain is the rectangle $-1\leq x,y\leq 1$, and the interface is denoted by a levelset function $\phi(x,y)$, i.e.,
\begin{align*}
&\Omega=\{(x,y)\in \mathbb{R}^{2}; -1\leq x,y\leq 1\},\\
&\Omega_{1}=\{(x,y)\in \Omega; \phi(x,y)>0\},\\
&\Omega_{2}=\{(x,y)\in \Omega; \phi(x,y)<0\}.
\end{align*}
We test the multigrid algorithm \ref{vcycle} with these examples, the initial guess is $\bm{0}$, and the stopping criterion is the $l^2$ norm of the relative residual being smaller than exp(-20).
\subsection{Example 1}
The interface is a circle centered at the origin with radius $r$, i.e.,
\begin{equation*}
\phi(x,y)=x^2+y^2-r^2.
\end{equation*}
The exact solution is chosen as follows
\begin{equation*}
u = \frac{1}{\beta}\phi(x,y)\sin(\pi x)\sin(\pi y).
\end{equation*}
We test the local anisotropic FEM for the second order elliptic interface problem \eqref{elliptic} whose exact solutions are defined as above and whose coefficient jump ratio $\beta_{1}/\beta_{2}=10^4,10^2,10^{-2},10^{-4}$. Numerical results are shown in Tables \ref{tab1}-\ref{tab4}, illustrating that the convergence rates are optimal in $H^1$-norm and $L^2$-norm. Figure \ref{uh112} illustrates that our method allows discontinuity of the gradient of the solution $u$ on the interface.

\vspace{1cm}
\begin{minipage}{\textwidth}
 \begin{minipage}[t]{0.5\textwidth}
  \centering
     \makeatletter\def\@captype{table}\makeatother
\captionsetup{font={small}}
\caption{\label{tab1}
Finite element errors  for {\bf Example 1} with $\beta_{1}=10^{4}$, $\beta_{2}=1$.}
\small
\begin{tabular}{|c|c|c|c|c|}
\hline
$\frac{1}{h}$          &     $\|u-u_{h}\|_{0,\Omega}$   & order&  $|u-u_{h}|_{1,\Omega}$  &   order    \\
\hline
 32&   1.3399e-03  &          &  3.3520e-02  &         \\
\hline
 64&   3.6122e-04  &  1.8911 &  1.7466e-02  &  0.9404\\
\hline
 128&  9.0503e-05  &  1.9968 &  8.8375e-03  &  0.9828\\
\hline
 256&  2.2666e-05  &  1.9974 &  4.4497e-03  &  0.9899\\
\hline
 512&  5.6388e-06  &  2.0070 &  2.2724e-03  &  0.9694\\
\hline
\end{tabular}
  \end{minipage}
  \begin{minipage}[t]{0.5\textwidth}
   \centering
        \makeatletter\def\@captype{table}\makeatother
\captionsetup{font={small}}
\caption{\label{tab2}
Finite element errors  for {\bf Example 1} with $\beta_{1}=10^{2}$, $\beta_{2}=1$.}
\small
\begin{tabular}{|c|c|c|c|c|}
\hline
$\frac{1}{h}$          &     $\|u-u_{h}\|_{0,\Omega}$   & order&  $|u-u_{h}|_{1,\Omega}$  &   order    \\
\hline
 32&   1.3415e-03  &         &  3.3523e-02  &         \\
\hline
 64&   3.6107e-04  &  1.8935 &  1.7467e-02  &  0.9404\\
\hline
 128&  9.0456e-05  &  1.9969 &  8.8399e-03  &  0.9825\\
\hline
 256&  2.2638e-05  &  1.9984 &  4.4511e-03  &  0.9898\\
\hline
 512&  5.6057e-06  &  2.0138 &  2.2727e-03  &  0.9697\\
\hline
\end{tabular}
   \end{minipage}
\end{minipage}

\vspace{1cm}
\begin{minipage}{\textwidth}
 \begin{minipage}[t]{0.5\textwidth}
  \centering
     \makeatletter\def\@captype{table}\makeatother
\captionsetup{font={small}}
\caption{\label{tab3}
Finite element errors  for {\bf Example 1} with $\beta_{1}=1$, $\beta_{2}=10^{2}$.}
\small
\begin{tabular}{|c|c|c|c|c|}
\hline
$\frac{1}{h}$          &     $\|u-u_{h}\|_{0,\Omega}$   & order&  $|u-u_{h}|_{1,\Omega}$  &   order    \\
\hline
 32&   3.9442e-03  &         &  9.7003e-02  &         \\
\hline
 64&   9.9666e-04  &  1.9845 &  4.8823e-02  &  0.9904\\
\hline
 128&  2.5030e-04  &  1.9934 &  2.4450e-02  &  0.9977\\
\hline
 256&  6.2653e-05  &  1.9982 &  1.2252e-02  &  0.9967\\
\hline
 512&  1.5648e-05  &  2.0013 &  6.1377e-03  &  0.9973\\
\hline
\end{tabular}
  \end{minipage}
  \begin{minipage}[t]{0.5\textwidth}
   \centering
        \makeatletter\def\@captype{table}\makeatother
\captionsetup{font={small}}
\caption{\label{tab4}
Finite element errors  for {\bf Example 1} with $\beta_{1}=1$, $\beta_{2}=10^{4}$.}
\small
\begin{tabular}{|c|c|c|c|c|}
\hline
$\frac{1}{h}$          &     $\|u-u_{h}\|_{0,\Omega}$   & order&  $|u-u_{h}|_{1,\Omega}$  &   order    \\
\hline
 32&   3.9444e-03  &         &  9.7007e-02  &         \\
\hline
 64&   9.9671e-04  &  1.9845 &  4.8825e-02  &  0.9904 \\
\hline
 128&  2.5033e-04  &  1.9933 &  2.4450e-02  &  0.9977\\
\hline
 256&  6.2665e-05  &  1.9981 &  1.2253e-02  &  0.9967\\
\hline
 512&  1.5659e-05  &  2.0006 &  6.1379e-03  &  0.9973\\
\hline
\end{tabular}
   \end{minipage}
\end{minipage}

\vspace{1cm}
From Tables \ref{tab11}-\ref{tab12}, we can see that the desired multigrid method converges uniformly with respect to the mesh size and the jump ratio.

\vspace{1cm}
\begin{minipage}{\textwidth}
 \begin{minipage}[t]{0.5\textwidth}
  \centering
     \makeatletter\def\@captype{table}\makeatother
\captionsetup{font={small}}
\caption{\label{tab11}
Numerical performance of Algorithm \ref{vcycle} for {\bf Example 1} with $\beta_1/\beta_2=10^{-4}$.}
\begin{tabular}{|c|c|c|c|c|}
\hline
$h$     & $2^{-6}$ &  $2^{-7}$&  $2^{-8}$  &   $2^{-9}$    \\
\hline
$\#$iter &   8     &  8      &        8  &    8     \\
\hline
\end{tabular}
  \end{minipage}
  \begin{minipage}[t]{0.5\textwidth}
   \centering
        \makeatletter\def\@captype{table}\makeatother
\captionsetup{font={small}}
\caption{\label{tab12}
Numerical performance of Algorithm \ref{vcycle} for {\bf Example 1} with $h=2^{-9}$.}
\begin{tabular}{|c|c|c|c|c|}
\hline
$\beta_1/\beta_2$     & $10^4$ &  $10^2$&  $10^{-2}$  &   $10^{-4}$    \\
\hline
$\#$iter &   8     &  8      &    8 &    8    \\
\hline
\end{tabular}
   \end{minipage}
\end{minipage}
%

\begin{figure}[H]\centering
\includegraphics[width=6cm,height=5cm]{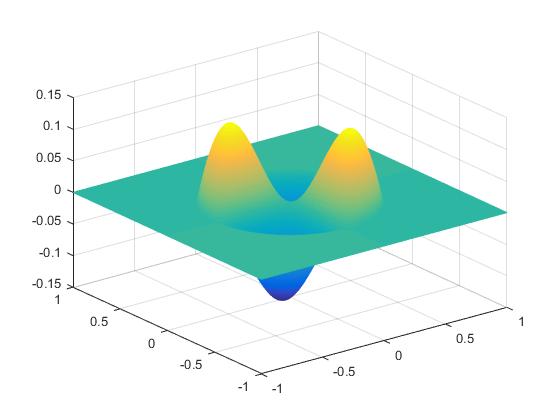}
\hspace{0.5cm}
\includegraphics[width=6cm,height=5cm]{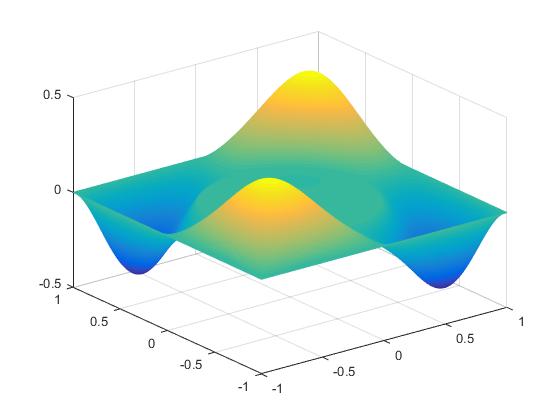}
  \caption{Numerical solutions for {\bf Example 1} with $\beta_1/\beta_2=10^4$(left) and $\beta_1/\beta_2=10^{-4}$(right).}\label{uh112}
\end{figure}

%
%

\subsection{Example 2}
The interface is a cardioid curve (see Figure \ref{interface2}),
\begin{equation*}
\phi(x,y)=((x+0.5)^2+y^2-0.5(x+0.5))^2-0.25((x+0.5)^2+y^2).
\end{equation*}
\begin{figure}[H]\centering
\includegraphics[width=5cm,height=4cm]{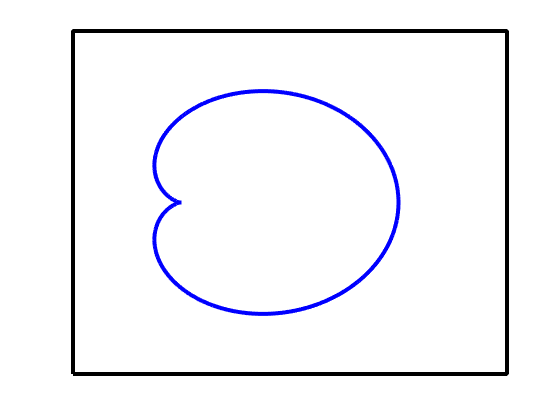}
  \caption{A cardioid interface.}\label{interface2}
\end{figure}
Then the exact solution is chosen as follows
\begin{equation*}
u = \frac{1}{\beta}\sin(\pi x)\sin(\pi y)+5\delta(x,y),
\end{equation*}
where
\begin{eqnarray*}
\delta(x,y)=\left\{
\begin{aligned}
&0,~~(x,y)\in \Omega_{1},\\
&1,~~(x,y)\in \Omega_{2}.
\end{aligned}
\right.
\end{eqnarray*}
We test our method for the second order elliptic interface problem \eqref{elliptic} whose exact solutions are defined as above and whose coefficient jump ratio $\beta_{1}/\beta_{2}=10^3,10^{-3}$, Numerical results are shown in Figure \ref{e2}, illustrating that the convergence rates are optimal in $H^1$-norm and $L^2$-norm. For the non-homogeneous case, Table \ref{tab21}-\ref{tab22} show that our multigrid algorithm is still optimal.
\begin{figure}[H]\centering
\includegraphics[width=6cm,height=5cm]{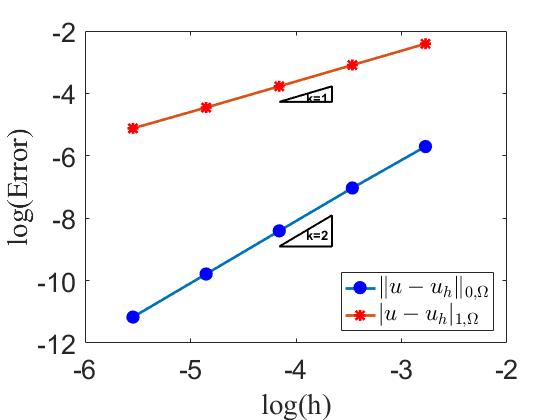}
\hspace{0.5cm}
\includegraphics[width=6cm,height=5cm]{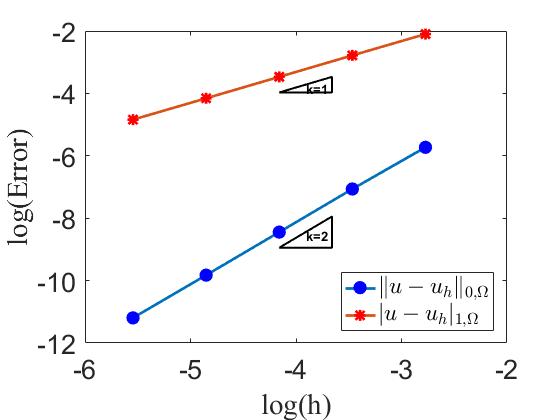}
  \caption{Finite element error analysis in log-log scale for {\bf Example 2} with $\beta_1/\beta_2=10^3$(left) and $\beta_1/\beta_2=10^{-3}$(right).}
  \label{e2}
\end{figure}
\begin{figure}[H]\centering
\includegraphics[width=6cm,height=5cm]{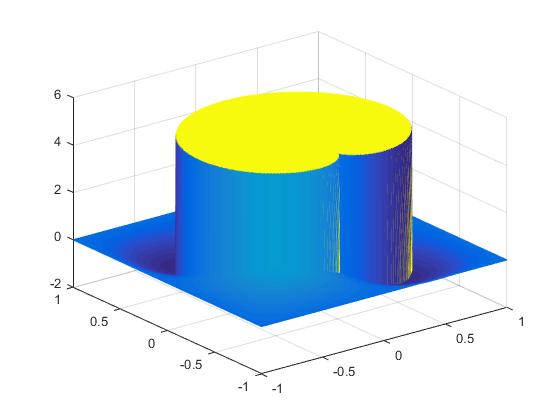}
\hspace{0.5cm}
\includegraphics[width=6cm,height=5cm]{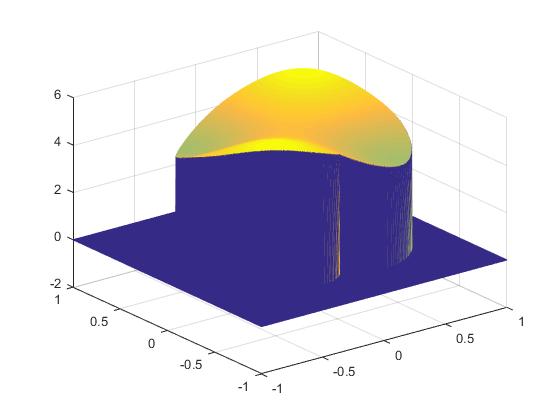}
  \caption{Numerical solutions for {\bf Example 2} with $\beta_1/\beta_2=10^3$(left) and $\beta_1/\beta_2=10^{-3}$(right).}
\end{figure}
\vspace{0.8cm}
\begin{minipage}{\textwidth}
 \begin{minipage}[t]{0.5\textwidth}
  \centering
     \makeatletter\def\@captype{table}\makeatother
\captionsetup{font={small}}
\caption{\label{tab21}
Numerical performance of Algorithm \ref{vcycle} for {\bf Example 2} with $\beta_1/\beta_2=10^{-4}$.}
\begin{tabular}{|c|c|c|c|c|}
\hline
$h$     & $2^{-6}$ &  $2^{-7}$&  $2^{-8}$  &   $2^{-9}$    \\
\hline
$\#$iter &   9     &  9      &        9  &    9     \\
\hline
\end{tabular}
  \end{minipage}
  \begin{minipage}[t]{0.5\textwidth}
   \centering
        \makeatletter\def\@captype{table}\makeatother
\captionsetup{font={small}}
\caption{\label{tab22}
Numerical performance of Algorithm \ref{vcycle} for {\bf Example 2} with $h=2^{-9}$.}
\begin{tabular}{|c|c|c|c|c|}
\hline
$\beta_1/\beta_2$     & $10^4$ &  $10^2$&  $10^{-2}$  &   $10^{-4}$    \\
\hline
$\#$iter &   9    &  9      &    9 &    9    \\
\hline
\end{tabular}
   \end{minipage}
\end{minipage}
%
\subsection{Example 3}
There are two interfaces in this example, one is a five star curve, the other is a circle ( see Figure \ref{interface3}), i.e,
\begin{equation*}
\phi(x,y)=(\rho_1-0.3 - 0.09\sin(5\theta))(\rho_2^2-0.09),
\end{equation*}
where $\rho_1 = (x+0.5)^2+y^2$, $\rho_2 = (x-0.5)^2+y^2$.
The exact solution is chosen as follows
\begin{equation*}
u = \frac{1}{\beta}\sin(\pi x)\sin(\pi y)+\delta(x,y).
\end{equation*}
We test the local anisotropic FEM for the second order elliptic interface problem \eqref{elliptic} whose exact solutions are defined as above and whose coefficient jump ratio $\beta_{1}/\beta_{2}=10^3,10^{-3}$. Numerical results are shown in Figure \ref{e2}, illustrating that the convergence rates are optimal in $H^1$-norm and $L^2$-norm.
\begin{figure}\centering
\includegraphics[width=5cm,height=4cm]{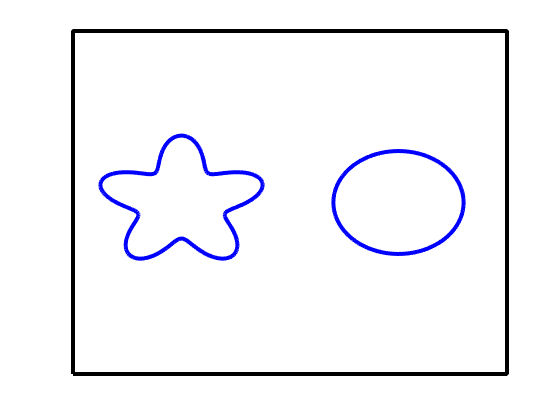}
  \caption{A five star and a circle interfaces.}\label{interface3}
\end{figure}
\begin{figure}\centering
\includegraphics[width=6cm,height=5cm]{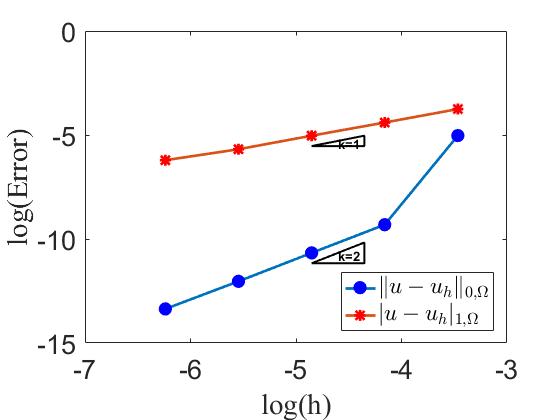}
\hspace{0.5cm}
\includegraphics[width=6cm,height=5cm]{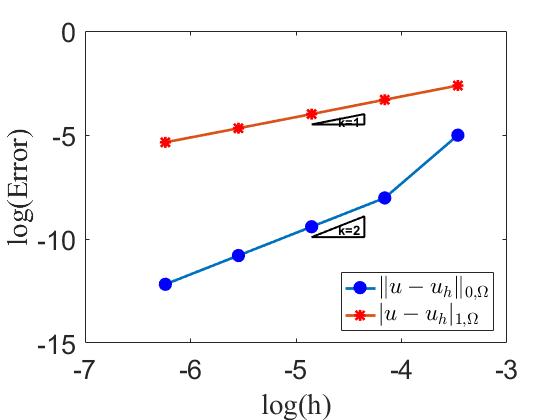}
  \caption{Finite element error analysis in log-log scale for {\bf Example 3} with $\beta_1/\beta_2=10^3$(left) and $\beta_1/\beta_2=10^{-3}$(right).}
  \label{e2}
\end{figure}

\begin{figure}[H]\centering
\includegraphics[width=6cm,height=5cm]{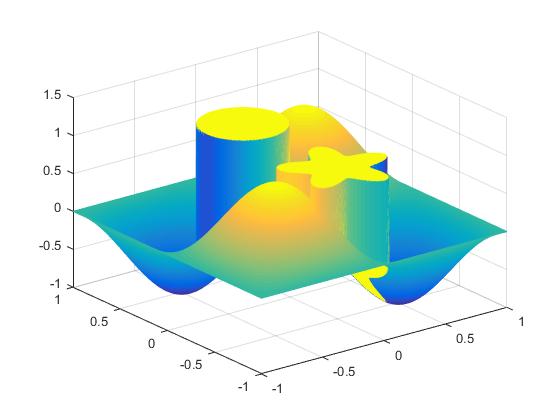}
\hspace{0.5cm}
\includegraphics[width=6cm,height=5cm]{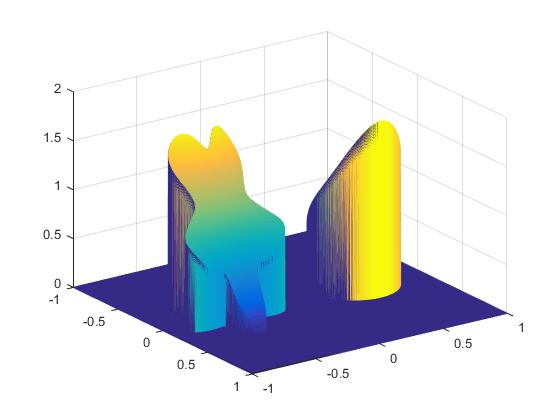}
  \caption{Numerical solutions for {\bf Example 3} with $\beta_1/\beta_2=10^3$(left) and $\beta_1/\beta_2=10^{-3}$(right).}
\end{figure}

Still Table \ref{tab21}-\ref{tab22} show that our multigrid algorithm has an optimal convergence rate independent of the mesh size and jump ratio.

\vspace{0.8cm}
\begin{minipage}{\textwidth}
 \begin{minipage}[t]{0.5\textwidth}
  \centering
     \makeatletter\def\@captype{table}\makeatother
\captionsetup{font={small}}
\caption{\label{tab31}
Numerical performance of Algorithm \ref{vcycle} for {\bf Example 3} with $\beta_1/\beta_2=10^{-4}$.}
\begin{tabular}{|c|c|c|c|c|}
\hline
$h$     & $2^{-6}$ &  $2^{-7}$&  $2^{-8}$  &   $2^{-9}$    \\
\hline
$\#$iter &   8     &  8      &        8  &    8    \\
\hline
\end{tabular}
  \end{minipage}
  \begin{minipage}[t]{0.5\textwidth}
   \centering
        \makeatletter\def\@captype{table}\makeatother
\captionsetup{font={small}}
\caption{\label{tab32}
Numerical performance of Algorithm \ref{vcycle} for {\bf Example 3} with $h=2^{-9}$.}
\begin{tabular}{|c|c|c|c|c|}
\hline
$\beta_1/\beta_2$     & $10^4$ &  $10^2$&  $10^{-2}$  &   $10^{-4}$    \\
\hline
$\#$iter &   8     &  8      &    8 &   8    \\
\hline
\end{tabular}
   \end{minipage}
\end{minipage}

%
\bibliographystyle{plain}
\bibliography{refer}
\end{document}